\newcommand{\comment}[1]{}
\newtheorem{theorem}{Theorem}
\newtheorem{definition}{Definition}
\newtheorem{thm}{Theorem}[section]
\newtheorem{lemma}{Lemma}
\newtheorem{example}[thm]{Example}
\theoremstyle{remark}
\providecommand*{\propertyautorefname}{Property}
\let\oldmarginpar\marginpar
\renewcommand\marginpar[1]{\oldmarginpar[\raggedleft\footnotesize #1]%
{\raggedright\footnotesize #1}}
\begin{document}
\begin{frontmatter}


\title{Stability of Stochastically Switched and Stochastically Time-Delayed Systems}
\author[camille]{Camille Carter}
\author[jacob]{Jacob Murri}
\author[david]{David Reber}
\author[ben]{Benjamin Webb}
\address[camille]{Department of Mathematics, Brigham Young University, Provo, UT 84602, USA}
\address[jacob]{Department of Mathematics, Brigham Young University, Provo, UT 84602, USA}
\address[david]{Independent Researcher, davidpreber@mathematics.byu.edu}
\address[ben]{Department of Mathematics, Brigham Young University, Provo, UT 84602, USA, bwebb@mathematics.byu.edu}

\newcommand{\R}{\mathbb{R}}


\begin{abstract}
In this paper we introduce the notion of a \textit{patient first-mean stable} system. Such systems are switched systems that are \textit{first-mean stable} meaning that they converge to a globally attracting fixed point on average. They are also patient so that they do not lose their first-mean stability when time-delays are introduced into the system. As time-delays are, in general, a source of instability and poor performance patient first-mean stability is a much stronger condition than first-mean stability. This notion of patient stability allows one to design systems that cannot be destabilized via time-delays. It also significantly reduces the difficulty of modeling such systems since in patient systems time-delays can, to a large extent, be safely ignored. The paper's main focus is on giving a sufficient criteria under which a system is patient first-mean stable and we give a number of examples that demonstrate the simplicity of this criteria.

\end{abstract}

\begin{keyword}
switched systems, time-delays, stability, dynamical networks
\end{keyword}

\end{frontmatter}

\section{Introduction}

Systems whose governing rules change over time are widely used to model and control real-world systems. For example, daily/annual cycles can be incorporated to refine population models \cite{tv:PeriodicPopulation}.
However, switching can be destabilizing, and traditional stability analysis can become intractable in a switching regime (see \cite{people5} for a detailed overview of the stability theory of switched systems).
In addition, real-world systems are often stochastically time-delayed due to traffic \cite{tv:StochasticTrains}, finite transmission speeds and spatial separation \cite{tv:IntroPaper}.
These delays can have a destabilizing effect on the system \cite{tv:destabilizing1,tv:destabilizing2}.
In addition, many social, biological, information, and technological systems have network-like structure \cite{tv:networks}, where the state space is in fact a product space.
In this paper we propose new tools to analyze discrete-time, delayed, switched systems on a product space.

When considering the stability of a stochastically switched system, one question which arises is whether to focus on bounding all possible trajectories, or only those most frequently encountered. If we want to consider all trajectories then we can, for example, consider the systems the joint spectral radius \cite{p14}, which can be used to determine the stability of linear switched systems \cite{p15}, by bounding the norms of all trajectories (although it is in general very difficult to compute or approximate). On the other hand, $p^{th}$ mean stability \cite{p7} is less demanding, requiring only that, for linear systems, the $p^{th}$ power of the norm of the trajectory converge to $\textbf{0}$ \textit{on average}.

In \cite{pradius} the $p$-radius is shown to characterize $p^{th}$ mean-stability in the case that switching is stochastically independent at each time step. The benefit is that using this criteria stability is relatively easy to compute. In particular, the stability of positive switched systems \cite{p10,p11,p12} is easily analyzed using the $p$-radius.

For delayed systems, the most common approaches for characterizing stability are Lyapunov methods, Linear Matrix Inequalities, and Semi-Definite Programming (see, for example, \cite{tv:referee1,tv:dependent1}. In \cite{tv:BunWebb0,tv:BunWebb2} the notion of intrinsic stability was developed as an alternative to these approaches for studying delayed systems. The approach used there consists of bounding the trajectories of a possibly nonlinear system by a positive linear system, which we refer to here as the associated \emph{Lipschitz} system. If this corresponding Lipschitz system is stable, then \cite{tv:BunWebb0} demonstrated that any delayed version of the nonlinear system will also be stable (including the original undelayed system), so long as the delays involved are constant in time. Thus, one can show intrinsic stability is relatively simple to verify using, for instance, the power method \cite{tv:powermethod}.

The approach of considering the intrinsic stability of a system has recently been extended to switched systems. \cite{timevaryingdelays} demonstrated that intrinsically stable unswitched systems are invariant to switching delays.
\cite{timevaryingdelays} also considered switched systems that are delayed, but since the primary proof technique relied on the joint spectral radius, additional constraints were required which increase the computational complexity of this approach.

In this paper we demonstrate that these issues can be avoided when we use the $1$-radius, which is the $p$-radius for $p=1$. To this end we define a general notion of \emph{patient first-mean stability} as the case when a stochastic switched (and possibly nonlinear) system retains first-mean stability even when time-delayed.
We show that the computational efficiency of the $p$-radius can be extended to nonlinear switched systems (Theorem \ref{thm:2}), and that in the case of the $1$-radius this condition is strong enough to imply the patient first-mean stability of the nonlinear switched system (Theorem \ref{thm:3}). Thus, the difficulty of modeling time delays in stochastic switched systems can be significantly reduced if the systems is known to be patient first mean-stable. That is, if time-delays exist in the systems they need not be modeled in that delays will not qualitatively effect the dynamics of the system and this is true whether the system is linear or nonlinear, positive or mixed-sign.

This paper is organized as follows. Section \ref{sec:2} formally introduces switched systems. Section \ref{sec:3} formalizes time-delayed switched systems, and illustrates some of the dynamics which consequently arise. Section \ref{sec:stability} introduces the tools used to check patient first-mean stability, and presents our main results (Theorems \ref{thm:2} and Theorem \ref{thm:3}).

\section{Switched Systems}\label{sec:2}

\begin{definition}\label{def:1}
{\textbf{\emph{(Dynamical System on a Product Space)}}}
Let $(X_i, \| \cdot \|_{X_i})$ be a Banach space for $1 \leq i \leq n$, and let $d_i$ be the metric induced by $\| \cdot \|_{X_i}$. Let $(X, \| \cdot \|)$ be the Banach space formed by giving the product space $X = \prod_{i=1}^n X_i$ the norm
\begin{align*}
    \|\mathbf{x} \| = \max_{1 \leq i \leq n} \| x_i \| \text{ where } \mathbf{x}=[x_1,x_2,\dots,x_n]^T \in X \text{ and } x_i \in X_i,
\end{align*}
and let $d$ be the metric induced by $\|\cdot\|$ on $X$. Let $F: X \to X$ be a Lipschitz continuous map with $i^{th}$ component $F_i:X\rightarrow X_i$. We refer to the dynamical system $(F,X)$ generated by iterating the function $F$ on $X$ as a \textit{dynamical system on a product space}. For the initial condition $\mathbf{x}^0 \in X$ we define the $k^{th}$ iterate of $\mathbf{x}^0$ to be $\mathbf{x}^k = F^k(\mathbf{x}^0)$, with orbit $\{F^k(\mathbf{x}^0)\}^\infty_{k=0} = \{\mathbf{x}^0, \mathbf{x}^1, \ldots\}$ in which $\mathbf{x}^k$ is the state of the system at time $k \geq 0$.
\end{definition}

The dynamical systems we consider in this paper are all of the form given in Definition \ref{def:1}. Specifically, each is a system consisting of a Lipschitz continuous map $F$ on a product space $X$. In this section we also define the notion of a \textit{switched system} in which, at each time step, we switch between systems of this type. In the following section we investigate how time delays affect a system's dynamics as a special case of these switched systems.

\begin{definition}\label{def:2}
{\textbf{\emph{(Stochastic Switched System)}}}
Let $M$ be a set of Lipschitz continuous mappings on the product space $X$ such that for every $F \in M$, $(F, X)$ is a dynamical system. For a probability space $(M,\mathscr{A}, \mu)$ we call $(M, X, \mu)$ a \emph{stochastic switched system} on $X$. Given some sequence $\{F^{(k)}\}^\infty_{k=1}$, where each $F^{(k)}\in M$ is drawn independently with respect to $\mu$, we say that $(\{F^{(k)}\}^\infty_{k=1}, X)$ is an instance of $(M,X,\mu)$ with orbit determined at time $k$ by $\mathbf{x}^k=\mathscr{F}^k(
\mathbf{x}^0) = F^{(k)} \circ \ldots F^{(2)} \circ F^{(1)}(\mathbf{x}^0)$ for the initial condition $\mathbf{x}^0 \in X$.
\end{definition}

\begin{example}\label{ex:1} Consider the dynamical systems $(F,\mathbb{R}^2)$ and $(G,\mathbb{R}^2)$ given by
\[
F\left(
\begin{bmatrix}
x_1 \\
x_2
\end{bmatrix}
\right) = \begin{bmatrix} \epsilon & 1 \\ 0 & \epsilon \end{bmatrix}
          \begin{bmatrix} \tanh(x_1) \\ \tanh(x_2) \end{bmatrix}
          \hspace{0.5cm}\text{and}\hspace{0.5cm}
G\left(
\begin{bmatrix}
x_1 \\
x_2
\end{bmatrix}
\right) = \begin{bmatrix} \epsilon & 0 \\ 1 & \epsilon \end{bmatrix}
          \begin{bmatrix} \tanh(x_1) \\ \tanh(x_2) \end{bmatrix}
\]
where $\epsilon\in\mathbb{R}$. Note that both $F$ and $G$ are Lipschitz continuous functions on $\mathbb{R}^2$. For $M=\{F,G\}$ and $\epsilon=1/2$ consider the three different switching distributions defined by $\mu_1(F)=1$, $\mu_1(G)=0$; $\mu_2(F)=9/10$, $mu_2(G)=1/10$; and $\mu_3(F)=1/2$, $\mu_3(G)=1/2$. This gives us three different switched systems in which we always select $F$, mostly select $F$, and evenly choose between $F$ and $G$, respectively.

In the  stochastic switched system $(M,\mathbb{R}^2,\mu_1)$ the function $F$ is selected at every time step so there is, in fact, no switching. Therefore there is only one instance of this system given by $(\{F,F,F,\dots\},\mathbb{R}^2)$. It is possible to show that for any initial condition, the state of this system is asymptotic to the origin (see Figure \ref{fig:1}, (left)). Likewise, the switched system where the function $G$ is selected at every time step shares this property.

In the switched system $(M,\mathbb{R}^2,\mu_2)$ the function $F$ is chosen with probability $9/10$ and $G$ is chosen with probability $1/10$ at each time step. This system also has the property that its state is asymptotic to the origin irrespective of its initial condition (see Example \ref{ex:3}), although the convergence to the origin is typically slower than in $(M,\mathbb{R}^2,\mu_1)$ (see Figure \ref{fig:1}, center).

In $(M,\mathbb{R}^2,\mu_3)$ the situation is quite different. Here at each time step we randomly select either the function $F$ or the function $G$ with equal probability. The result is that the system's state no longer converges to any point (see Figure \ref{fig:1} (right)). That is, if we switch frequently enough between $(F,\mathbb{R}^2)$ and $(G,\mathbb{R}^2)$ the state of the system no longer tends to a single point and the system's dynamics are no longer stable.
\end{example}

\begin{figure}[h!]
\centering
    \begin{overpic}[scale=0.33]{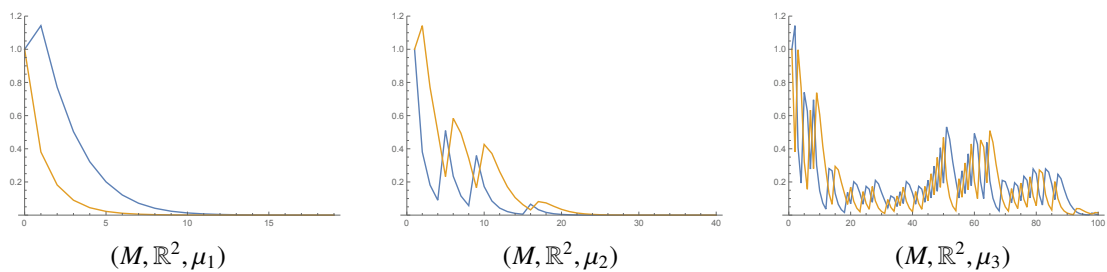}
    \put(10,-2){$(M,\mathbb{R}^2,\mu_1)$}
    \put(45,-2){$(M,\mathbb{R}^2,\mu_2)$}
    \put(80,-2){$(M,\mathbb{R}^2,\mu_3)$}
    \end{overpic}
\vspace{0.25cm}
\caption{The dynamics of the switched system $(M,\mathbb{R}^2,\mu_i)$ from Example \ref{ex:1} is shown left, center, and right for $i=1$, $i=2$, and $i=3$, respectively, for the case in which $\epsilon=1/2$.}\label{fig:1}
\end{figure}

Here we say that a dynamical system is \textit{stable} if it has a globally attracting fixed point. Specifically, $(F,X)$ is \textit{stable} if there is a point $\mathbf{x}^*\in X$ such that
\[
\lim_{k\rightarrow\infty}F^k(\mathbf{x})=\mathbf{x}^* \quad \text{for any} \quad \mathbf{x}^0\in X.
\]
Such a point $\mathbf{x}^*$ is referred to as a \textit{globally attracting fixed point}.

In a switched system the notion of convergence to such a point is more complicated since for some instances the state of the system may evolve toward some $\mathbf{x}^*\in X$ while for others it may not. To illustrate, in Example \ref{ex:1} $(\{F,F,F,\dots\},\mathbb{R}^2)$ is a possible instance of the system $(M,\mathbb{R}^2,\mu_3)$. This particular instance has the globally attracting fixed point $\mathbf{x}^*=(0,0)$ but not every instance of this switched system is asymptotic to this point (see Figure \ref{fig:1} (right)). Later, in Section \ref{sec:stability}, we define the notion of $p^{th}$-mean stability (see Definition \ref{def:pthmeanstable}) which extends the notion of stability to switched systems.

\section{Time-Delayed Systems}\label{sec:3}

For any dynamical system $(F,X)$ one can consider time-delayed versions of the system. In many real-world systems, time delays are an inherent part of the system's dynamics. For example, one can consider the dynamical system $(F,X)$ given in Definition \ref{def:1} to be a model for the dynamics of real-world networks. In this setting the network is comprised of $n$ elements where the dynamics of the $i^{th}$ element is given by the function $F_i:X\rightarrow X_i$. Although the function $F_i=F_i(x_1,\dots,x_n)$ formally depends on each $x_j$ for $j=1,\dots,n$ it may, in fact, only depend on a subset of these variables. The network elements corresponding to this subset are the elements that directly \textit{interact}, or influence the dynamics of the $i^{th}$ network element.

In many real-world networks these interactions do not happen instantaneously but are time-delayed. For example, network elements are often separated by distance, or there may be other processes that must occur before a network element can influence another.

We define a time-delayed version of a dynamical system $(F,X)$ as follows.

\begin{definition}{\textbf{\emph{(Time-Delayed Dynamical System)}}}
\label{def:3}
Let $(F,X)$ be a dynamical system given in Definition \ref{def:1} on the product space $X=\prod_{i=1}^n X_i$. Let $D=[d_{ij}]\in\mathbb{N}^{n\times n}_L$ be a delay matrix with $\max_{i,j}d_{ij}\le L$, a bound on the delay length. Define $X_L$, the \emph{extension of $X$ to delay-space}, to be
\[
X_L = \prod_{\ell=0}^L \prod_{i=1}^n X_{i,\ell }
\quad \text{where} \quad X_{i,\ell}=X_i \quad \text{for} \quad 1 \le i \le n, \, 0 \le \ell \le L.
\]
Componentwise, define $F_D:X_L \to X_L$ by
\begin{equation}\label{identity}
(F_D)_{i,\ell+1}(x_{i,\ell}) = x_{i,\ell}\quad \text{ for } \quad 0 \le \ell \le L-1, \, 1 \leq i \leq n,
\end{equation}
that is the identity map, and
\begin{equation}\label{same-but-later}
(F_D)_{i,0}(x_{1,d_{i1}},x_{2,d_{i2}},\hdots,x_{n,d_{in}}) = F_i(x_{1,d_{i1}},x_{2,d_{i2}},\hdots,x_{n,d_{in}})
\end{equation}
where $F_i:X\to X_i$ is the $i^{th}$ component function of $F$ for $i=1,2,\hdots,n$.
Then $(F_D,X_L)$ is the \emph{delayed version} of $(F,X)$ corresponding to the fixed delay matrix $D$ with \emph{delay bound} $L$.
\end{definition}

This definition of a time-delayed dynamical system uses the product structure of $X$ to create the new larger delay space $X_L$. For every dimension of original space $X$ we add $L$ extra dimensions to our product space to allow for the ``storage" of the time-delayed interactions. If the space $X$ has dimension $n$, the space $X_L$ has dimension $n(L+1).$ Viewing the dynamical system as a network sheds some light on this construction. When an interaction between element $i$ and element $j$ is delayed by $\tau$ time-steps (i.e. $d_{ij}=\tau$), the state of the element $i$ does not immediately affect the state of element $j$. Rather the ``effect'' $i$ has on $j$ is passed through $\tau$ ``filler'' elements before reaching element $j$. In other words, the state of the $i$th element $\ell$ time-steps in the past is stored as $x_{i,\ell}$. The information about which interactions are delayed and by how much is stored in the delay matrix $D = [d_{ij}] \in \mathbb{N}_L^{n \times n}$.

\begin{figure}
\centering
    \begin{overpic}[scale=0.3]{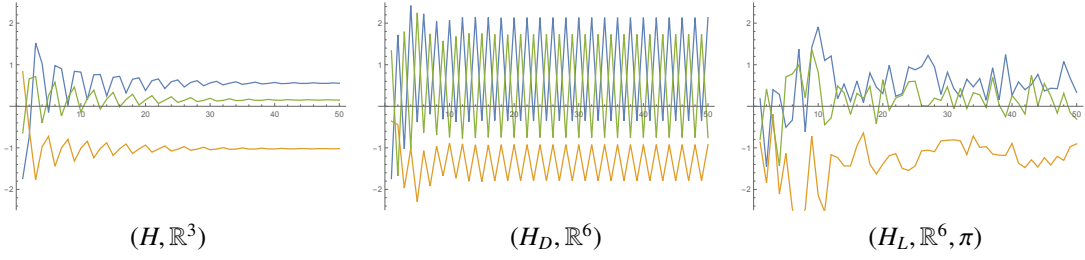}
    \put(12,-2){$(H,\mathbb{R}^3)$}
    \put(47,-2){$(H_D,\mathbb{R}^6)$}
    \put(80,-2){$(H_{L},\mathbb{R}^6,\pi)$}
    \end{overpic}
\vspace{0.25cm}
\caption{The stable dynamics of the system $(H,\mathbb{R}^3)$ from Example \ref{ex:2} is shown left. Adding the constant time delays $D$ given by Equation \ref{eq:3} results in the system $(H_D,\mathbb{R}^6)$ with periodic dynamics, shown center. Adding stochastically time-varying time-delays of size no larger than $L=1$ similarly destabilizes the system's dynamics. This is shown right for the system $(H_{L},\mathbb{R}^6,\pi)$ (see Example \ref{ex:3}) for details.}  \label{fig:2}
\end{figure}

\begin{example}\label{ex:2} \textbf{(Constant Time-Delays)} Consider the dynamical system $(H,\mathbb{R}^3)$ given by
\[
H\left(
\begin{bmatrix}
x_1 \\
x_2 \\
x_3
\end{bmatrix}
\right) = \begin{bmatrix}
\frac{1}{4}x_1 - \tanh(x_1) + \tanh(x_2) - \tanh(x_3)\\
\frac{1}{4}x_2 - \tanh(x_1) + \tanh(x_2) - \tanh(x_3) - 1/2\\
\frac{1}{4}x_3 + \tanh(x_1) - \tanh(x_3)
\end{bmatrix}.
\]
For the delay matrix
\begin{equation}\label{eq:3}
D = \begin{bmatrix}
0 & 1 & 1 \\
0 & 0 & 1 \\
0 & 0 & 0
\end{bmatrix},
\end{equation}
which has delay bound $L = 1$, the delayed version is given by the function $H_D:X_L\rightarrow X_L$ where $X_L=\mathbb{R}^6$, which has the form
\[
H_D\left(
\begin{bmatrix}
x_{1, 0} \\
x_{2, 0} \\
x_{3, 0} \\
x_{1, 1} \\
x_{2, 1} \\
x_{3, 1}
\end{bmatrix}
\right)
=
\begin{bmatrix}
\frac{1}{4}x_{1, 0} - \tanh(x_{1, 0}) + \tanh(x_{2, 1}) - \tanh(x_{3, 1})\\
\frac{1}{4}x_{2, 0} - \tanh(x_{1, 0}) + \tanh(x_{2, 0}) - \tanh(x_{3, 1}) - 1/2\\
\frac{1}{4}x_{3, 0} + \tanh(x_{1, 0}) - \tanh(x_{3, 0}) \\
x_{1, 0} \\
x_{2, 0} \\
x_{3, 0} \\
\end{bmatrix}.
\]
Here the variables $x_{1,1}, x_{2,1}, x_{3,1}$ represent the ``filler" elements which store delay information. 

Note that the original undelayed system $(H, \mathbb{R}^3)$ is stable with the globally attracting fixed point $\mathbf{x}\approx[-1.05, 0.62, 0.14]^T$ but the delayed version $(H_D, \mathbb{R}^6)$ is not (see Figure \ref{fig:2} left and center, respectively). That is, a system's dynamics can be destabilized by the introduction of time-delays.
\end{example}


Not only can time delays be introduced into dynamical systems they can also be introduced into switched systems.

\begin{definition}{\textbf{\emph{(Delayed Stochastic Switched System)}}}
Let $(M, X, \mu)$ be a stochastic switched system with corresponding probability space $(M, \mathscr{A}, \mu)$. For the delay bound $L$ let $M_L$ be the set of all delayed versions of functions in $M$ with maximum delay length of $L$, i.e.
$$M_L = \{F_D : F \in M \text{ and } \ D \in \mathbb{N}^{n \times n}_L\}.$$
Let $(M_L, \mathscr{A}_D, \pi)$ be a probability space such that for a measurable set $E \subset M$, the set of all \textit{delayed versions} of each $F\in E$, given by
$$E_L = \{F_D : F \in E, D \in \mathbb{N}^{n \times n}_L\},$$
we have $\pi(E_L)=\mu(E)$. If this is the case we call the corresponding switched system $(M_L,X_L,\pi)$ a \textit{delayed version} of $(M,X,\mu)$.


\end{definition}

In the delayed version $(M_L,X_L,\pi)$ of the switched system $(M,X,\mu)$ the idea is that we choose at each time step $k\geq 1$ some $F^{(k)}\in M$ and some delay matrix $D^{(k)}\in\mathbb{N}^{n\times n}_L$. The function $F^{(k)}_{D^{(k)}}\in M_L$ is chosen in this system at time $k$ with probability $\pi(F^{(k)}_{D^{(k)}})$. The one stipulation is the probability distribution $\pi$ has the property
\begin{equation}\label{eq:delayed}
\sum_{D^{(k)}\in\mathbb{N}^{n\times n}_L}\pi(F^{(k)}_{D^{(k)}})=\mu(F^{(k)}).
\end{equation}
That is, delayed versions of a function $F\in M$ are as likely in $(M_L,X_L,\pi)$ as $F$ is in the undelayed system $(M,X,\mu)$. This gives us an instance $(\{F^{(k)}_{D^{(k)}}\}_{k=1}^\infty,X_L)$ of the delayed stochastic switched system $(M_L,X_L,\pi)$.

It is also worth noting that if we have a delayed system $(M_L,X_L,\pi)$ then we can uniquely recover the original undelayed system $(M,X,\mu)$. The reason for this is that the probability distribution $\mu$ that determines the frequency at which we choose a function $F\in M$ is given to us in terms of $\pi$ by Equation \eqref{eq:delayed}.

\begin{figure}
\centering
    \begin{overpic}[scale=0.33]{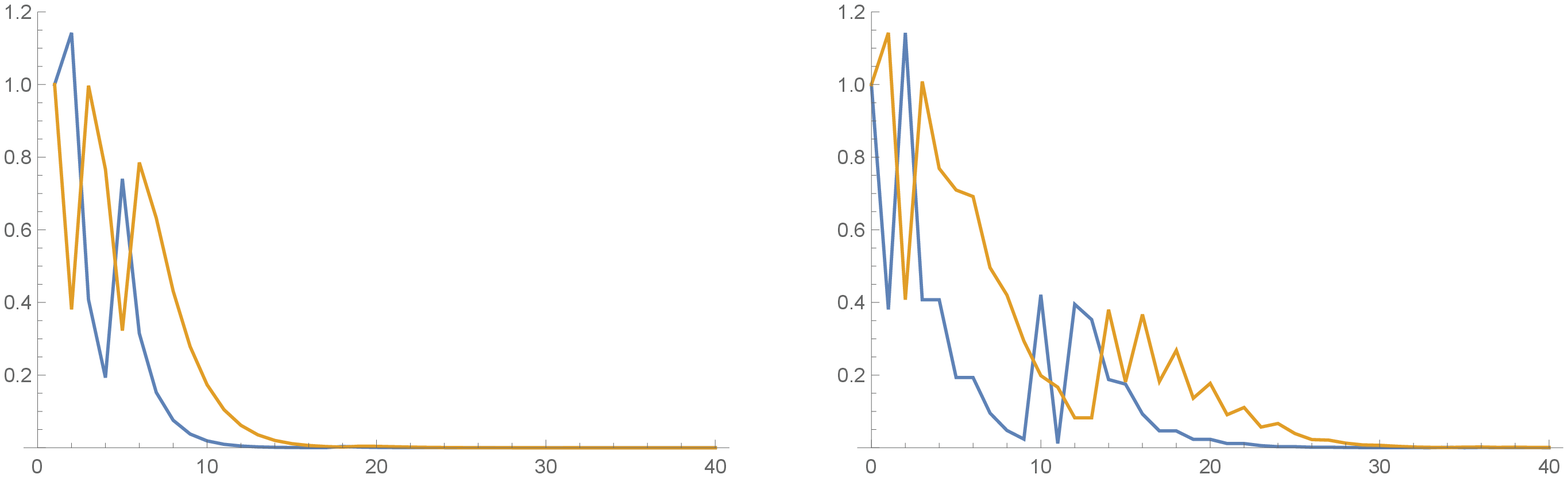}
    \put(20,-2){$(M,\mathbb{R}^2,\mu_2)$}
    \put(70,-2){$(M_L,\mathbb{R}^2,\pi_2)$}
    \end{overpic}
\vspace{0.25cm}
\caption{The undelayed dynamics of the system $(M,\mathbb{R}^2,\mu_2)$ from Example \ref{ex:1} is shown left. The delayed version of this system $(M_L,\mathbb{R}^2,\pi_2)$ for $L=1$ is shown right. Both are stable.}\label{fig:2.1}
\end{figure}

\begin{example} \textbf{(Time-Varying Time-Delays)} Let $M$ be the single mapping $F:X\rightarrow X$, i.e. $M=\{F\}$. Then $(M,X,\mu)$ with $\mu(F)=1$ is a switched system, although there is no system to switch to apart from $(F,X)$. For a delay bound of $L$ the corresponding delayed switched system is $(M_L,X_L,\pi)$ in which an instance $(\{F_{D^{(k)}}\}_{k=1}^\infty,X_L)$ is composed of functions $F_{D^{(k)}}:X_L\rightarrow X_L$ each of which are time-delayed versions of the original function $F$.

Since we switch between different time-delayed versions of the same function $F$ the result is what is referred to as a \textit{dynamical system with time-varying time-delays} where each time-delayed version of $F$ is chosen at each time step with probability determined by $\pi$. We write this system as $(F_D,X_L,\pi)$.

Taking the system $(H,\mathbb{R}^3)$ from Example \ref{ex:2} we can add time-varying time-delays to this system as follows. Let $L=1$. Then there are $2^9$ different delay matrices $D=[d_{ij}]\in\mathbb{N}^{3\times3}_L$, which can be used to create delayed versions $(H_D,\mathbb{R}^6)$ of $(H,\mathbb{R}^3)$. Suppose $\pi$ is the probability distribution that chooses each of these $2^9$ delayed systems with equal probability. The dynamics of the resulting system $(H_L,\mathbb{R}^6,\pi)$ is shown in Figure \ref{fig:2} (right). Note that similar to adding constant time delays adding these time-varying time-delays to the system similarly has a destabilizing effect to the system's dynamics.
\end{example}

Adding time delays can also destabilize switched systems but this is not always the case. For some switched systems it can happen that the addition of time-delays cannot destabilize the system so long as the delay bound $L<\infty$. These ``patient" systems will be introduced later in Section \ref{sec:stability} (see Definition \ref{def:patientstability}).

\begin{example}\label{ex:4}
Consider the switched system $(M, \mathbb{R}^2, \mu_2)$ given in Example \ref{ex:1}. For $L=1$ let $(M_L,\mathbb{R}^4,\pi_2)$ be the delayed version of this system in which the probability distribution $\pi_2$ is given by
$$
\pi(F_D)=\frac{1}{16}\mu_2(F)=\frac{9}{160} \quad \text{and} \quad \pi(G_D)=\frac{1}{16}\mu_2(G)=\frac{1}{160} \quad \text{ for } \text{ any } \quad D\in\mathbb{N}^{2 \times 2}_L.
$$
Note that there are $2^4$ delayed versions of each of $(F,\mathbb{R}^2)$ and $(G,\mathbb{R}^2)$, making for 32 total systems in $M_L$. Here each delayed version of $F$ is equally likely, and each delayed version of $G$ is equally likely, but delayed versions of $F$ are more likely than delayed versions of $G$.

As is shown in Figure \ref{fig:2.1} the delayed switched system $(M_L,\mathbb{R}^4,\pi_2)$ is stable. In fact, any time-delayed version of $(M, \mathbb{R}^2, \mu_2)$ will be stable (see Example \ref{ex:3}). That is, irrespective of the maximal delay length $L<\infty$ and of the way in which we create the probability distribution $\pi$, this switched system cannot be destabilized by time delays.

In contrast, consider the dynamical systems $(\bar{F},\mathbb{R}^2)$ and $(\bar{G},\mathbb{R}^2)$ given by
\[
\bar{F}\left(
\begin{bmatrix}
x_1 \\
x_2
\end{bmatrix}
\right) = \begin{bmatrix} -\epsilon & 1 \\ 0 & -\epsilon \end{bmatrix}
          \begin{bmatrix} \tanh(x_1) \\ \tanh(x_2) \end{bmatrix}
          \hspace{0.5cm}\text{and}\hspace{0.5cm}
\bar{G}\left(
\begin{bmatrix}
x_1 \\
x_2
\end{bmatrix}
\right) = \begin{bmatrix} -\epsilon & 0 \\ -1 & -\epsilon \end{bmatrix}
          \begin{bmatrix} \tanh(x_1) \\ \tanh(x_2) \end{bmatrix}
\]
where $\epsilon = \frac{2}{3}$. Let $\bar{\mu}$ be the distribution given by $\bar{\mu}(F) = \bar{\mu}(G) = \frac{1}{2}$. For $\bar{M} = \{\bar{F}, \bar{G}\}$, the stochastic switched system $(\bar{M}, \mathbb{R}^2, \bar{\mu})$ is stable (see Figure \ref{fig:2.2}, left).

\begin{figure}
\centering
    \begin{overpic}[scale=0.33]{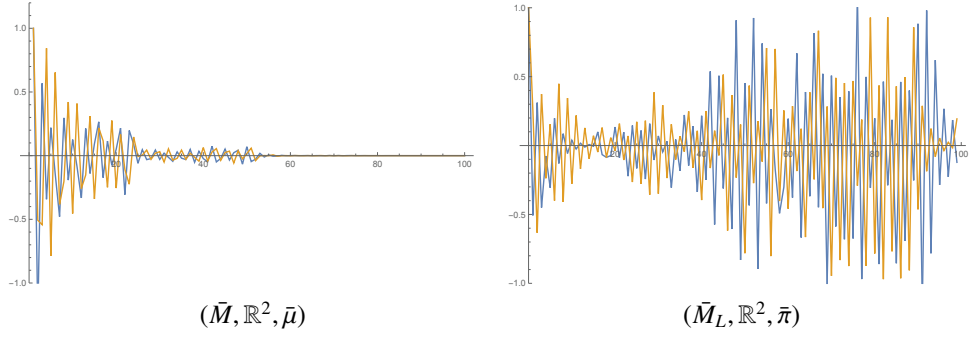}
    \put(20,-2){$(\bar{M},\mathbb{R}^2,\bar{\mu})$}
    \put(70,-2){$(\bar{M}_L,\mathbb{R}^2,\bar{\pi})$}
    \end{overpic}
\vspace{0.25cm}
\caption{The undelayed dynamics of the system $(\bar{M},\mathbb{R}^2,\bar{\mu})$ from Example \ref{ex:4} is shown left. A delayed version of this system $(\bar{M}_L,\mathbb{R}^2,\bar{\pi})$ for $L=1$ is shown right. In this case time-delays destabilize the  switched system.}\label{fig:2.2}

\end{figure}

However, this system can be destabilized by time delays. Consider the delay matrices
\[
D_0 = \begin{bmatrix} 0 & 0 \\ 0 & 0\end{bmatrix} \quad \text{and} \quad D_1 = \begin{bmatrix} 0 & 0 \\ 1 & 0\end{bmatrix},
\]
which correspond to no delays and to a single delay, respectively. Let $\bar{\pi}$ be the probability distribution on $\bar{M}_L$ such that for $H_D \in \bar{M}_L$, we have
\begin{align*}
    \bar{\pi}(H_D) =
    \begin{cases}
    \frac{1}{2} \text{ if } H = \bar{F}, D = D_0  \\
    \frac{1}{20} \text{ if } H = \bar{G}, D = D_0 \\
    \frac{9}{20} \text{ if } H = \bar{G}, D = D_1  \\
     0 \text{ otherwise}.
    \end{cases}
\end{align*}

The stochastically delayed version $(\bar{M}_L, \mathbb{R}^4, \bar{\pi})$ of the system $(\bar{M}, \mathbb{R}^2, \bar{\mu})$
switches between three systems: $\bar{F}, \bar{G}$, and $\bar{G}$ with a single delay. Including this version of $\bar{G}$ with a single delay destabilizes the system (see Figure \ref{fig:2.2}, right). Technically, the system is no longer first mean stable (see Definition \ref{def:pthmeanstable}).
\end{example}

The question we answer in the following sections is why do certain switched systems remain stable when they are delayed while others do not. Specifically, we give a sufficient condition under which a switched system cannot be destablized under the addition of time-delays.

\section{Stability of Stochastically Switched and Delayed Systems}\label{sec:stability}

In this section we examine the $p^{th}$-mean stability of nonlinear stochastic systems. To do this, we create a linear system that bounds the nonlinear system and use tools developed to find stability of linear systems to determine the stability of the nonlinear system.

\begin{definition}{\textbf{\emph{(Lipschitz Matrix)}}}\label{def:5} Let $F: X \to X$ be a Lipschitz continuous mapping on $X$. Then there exist real constants $a_{ij} \geq  0$ such that
\begin{align*}
    \|F_i(x)-F_i(y)\|\leq \sum_{j=1}^n a_{ij} \, \|x_j- y_j\|
\end{align*}
for all $x,y \in X$. We call $A = [a_{ij}] \in \mathbb{R}^{n \times n}$ a \emph{Lipschitz matrix} of the mapping $F$.
\end{definition}

Note that if $F$ is Lipschitz continuous with Lipschitz matrix $A=[a_{ij}]$, and $A \preceq B\in\mathbb{R}^{n\times n}$ entrywise, then $B$ is also a Lipschitz matrix of $F$. That is, $F$ has infinitely many Lipschitz matrices. If $F$ is differentiable, the entrywise smallest Lipschitz matrix $A$ of $F$ is efficiently obtained as
\begin{equation}\label{PartialsStabilityMatrix}
a_{ij} = \sup_{x \in X} \left\lvert\frac{\partial F_i}{\partial x_j}(\mathbf{x})\right\rvert.
\end{equation}
While using entrywise smaller Lipschitz matrices provides tighter bounds on the nonlinear system $F$, the results of this paper apply to any choice of Lipchitz matrix $A$ of $F$.

\begin{definition}\label{def:6}{\textbf{\emph{(Stochastic Lipschitz System)}}}
Let $(M, X, \mu)$ be a stochastic switched system where each $F\in M$ is Lipschitz continuous. We construct a stochastic Lipschitz set $S \subseteq \mathbb{R}^{n \times n}$ as
follows: For each $F \in M$, contribute exactly one Lipschitz matrix $A\in\mathbb{R}^{n\times n}$ of $F$ to the set $S$. Define a probability distribution $\mu'$ on $S$ by setting $\mu'(A) = \mu(F)$ for each $A \in S$. The system $(S,\mathbb{R}^n, \mu')$ is a \textit{linear stochastic switched system}.
\end{definition}

A fact that will be useful to us is that if $F:X\rightarrow X$ is Lipschitz continuous then any delayed version $F_D: X_L\rightarrow X_L$ of this function is also Lipschitz continuous. The following lemma, proved in \cite{timevaryingdelays}, describes how to construct a Lipschitz matrix of a delayed version $F_D$ of $F$ from a Lipschitz matrix of $F$.

\begin{lemma}{\textbf{\emph{(Lipschitz Set for Stochastic Switched Systems)}}}\label{lem:1}
Let $(M,X, \mu)$ be a stochastic switched system with Lipschitz set $S$. For every $A \in S$ and $D\in\mathbb{N}^{n\times n}_L$ define $A_D$ as
\begin{align*}
    A_D =
    \begin{bmatrix}
    A_0 & A_1 & \ldots & A_{L-1} &  A_L\\
    I_n & 0 & \ldots & 0 & 0 \\
    0 & I_n & \ldots & 0 & 0 \\
    \vdots & \vdots & \ddots & \vdots & \vdots \\
    0 & 0 & \ldots & I_n & 0
    \end{bmatrix}\in\mathbb{R}^{n(L+1)\times n(L+1)}
\end{align*}
where $I_n\in\{0,1\}^{n\times n}$ is the identity matrix and each $A_\ell \in \mathbb{R}^{n \times n}$ is defined element-wise by $A_\ell = [a_{ij}\mathbbm{1}_{d_{ij} = \ell}]$, with the indicator function $\mathbbm{1}_{d_{ij} = \ell}$ defined as
\begin{align*}
    \mathbbm{1}_{d_{ij} = \ell} =
    \begin{cases}
    1 \text{ if $d_{ij}=\ell$} \\
    0 \text{ otherwise}
    \end{cases}
    \text{ for } \ 0 \leq \ell \leq L.
\end{align*}
Furthermore, let $S_L = \{ A_D: A \in S, D \in\mathbb{N}^{n\times n}_L\}$. Then each $A_D$ is a Lipschitz matrix of $F_D$ and $S_L$ is a stochastic Lipschitz set of the corresponding delayed system $(M_L, X_L, \pi)$. This forms a delayed linear stochastic switched system $(S_L, \mathbb{R}^{n(L+1)}, \pi')$, where we define $\pi'(A_D) = \pi(F_D)$ for every $F_D \in M_L$ where $A_D$ is the Lipschitz matrix of $F_D$, so that the condition $\sum_{D \in \mathbb{N}^{n \times n}_L} \pi'(A_D) = \pi'(A)$ holds.
\end{lemma}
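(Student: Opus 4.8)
The plan is to verify the lemma directly from the definitions, treating the index pair $(i,\ell)$ — component $i$ at delay level $\ell$ — as a single coordinate label for both $X_L$ and the rows and columns of $A_D$. Since $X_L$ is itself a product space carrying the max-norm inherited from Definition \ref{def:1}, a matrix $B$ qualifies as a Lipschitz matrix of $F_D$ exactly when the scalar inequality of Definition \ref{def:5} holds in each output coordinate $(i,\ell)$ separately. So I would split the verification according to the two kinds of block rows of $A_D$: the subdiagonal identity rows and the top row $[A_0,\dots,A_L]$.

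First, for the output coordinates $(i,\ell)$ with $\ell \ge 1$, Equation \eqref{identity} gives $(F_D)_{i,\ell}(\mathbf{x}) = x_{i,\ell-1}$, so that $\|(F_D)_{i,\ell}(\mathbf{x}) - (F_D)_{i,\ell}(\mathbf{y})\| = \|x_{i,\ell-1} - y_{i,\ell-1}\|$. The matching row of $A_D$ is a block row of the subdiagonal identity, whose only nonzero entry is a $1$ in column $(i,\ell-1)$; hence the right-hand side of the Lipschitz inequality is exactly $\|x_{i,\ell-1} - y_{i,\ell-1}\|$, and the inequality holds with equality.

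Second, for the output coordinates $(i,0)$, Equation \eqref{same-but-later} gives $(F_D)_{i,0}(\mathbf{x}) = F_i(x_{1,d_{i1}},\dots,x_{n,d_{in}})$. Because $A$ is a Lipschitz matrix of $F$, applying Definition \ref{def:5} to $F_i$ with arguments $x_{j,d_{ij}}$ and $y_{j,d_{ij}}$ yields $\|(F_D)_{i,0}(\mathbf{x}) - (F_D)_{i,0}(\mathbf{y})\| \le \sum_{j=1}^n a_{ij}\,\|x_{j,d_{ij}} - y_{j,d_{ij}}\|$. The decomposition $A_\ell = [a_{ij}\mathbbm{1}_{d_{ij}=\ell}]$ is engineered precisely so that the $(i,0)$-row of the top block has entry $a_{ij}\mathbbm{1}_{d_{ij}=m}$ in column $(j,m)$; therefore $\sum_{j,m}(A_m)_{ij}\|x_{j,m}-y_{j,m}\| = \sum_{j=1}^n a_{ij}\|x_{j,d_{ij}} - y_{j,d_{ij}}\|$, which is exactly the bound just obtained. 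This establishes that each $A_D$ is a Lipschitz matrix of $F_D$. I expect this $\ell=0$ case to be the crux of the argument — not because it is deep, but because one must read the block-indicator decomposition correctly and confirm that only the single term $m=d_{ij}$ survives the inner sum, reproducing the evaluation of $F_i$ at the delayed coordinates.

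Finally, the claim that $S_L$ and $\pi'$ constitute a stochastic Lipschitz set and a delayed linear stochastic switched system is bookkeeping rather than analysis. Setting $S_L = \{A_D : A\in S,\ D\in\mathbb{N}^{n\times n}_L\}$ and $\pi'(A_D) = \pi(F_D)$ assigns exactly one Lipschitz matrix to each $F_D \in M_L$, since $S$ contributes one matrix per $F$ (Definition \ref{def:6}) and $D$ is then fixed; this matches the format required by Definition \ref{def:6}. The consistency condition follows by pushing Equation \eqref{eq:delayed} through these identifications: summing over delay matrices gives $\sum_{D}\pi'(A_D) = \sum_{D}\pi(F_D) = \mu(F) = \mu'(A)$, where the middle equality is \eqref{eq:delayed} and the last is the defining relation $\mu'(A)=\mu(F)$ from Definition \ref{def:6}. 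Thus the entire proof reduces, once the combined coordinate label $(i,\ell)$ is fixed, to matching coefficients and transporting a single summation identity across the function/matrix correspondence.
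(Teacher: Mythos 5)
Your proof is correct: the coordinate-wise verification (identity rows give equality, and the top row reduces to the Lipschitz inequality for $F_i$ because only the term $m=d_{ij}$ survives in $\sum_m (A_m)_{ij}\|x_{j,m}-y_{j,m}\|$) is exactly the right argument, and the measure bookkeeping via Equation \eqref{eq:delayed} is handled properly. Note that the paper itself does not prove this lemma but defers to \cite{timevaryingdelays}; your direct verification is the standard argument given there, so there is no divergence in approach to report.
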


\begin{figure}
    \centering
    \begin{overpic}[scale=0.33]{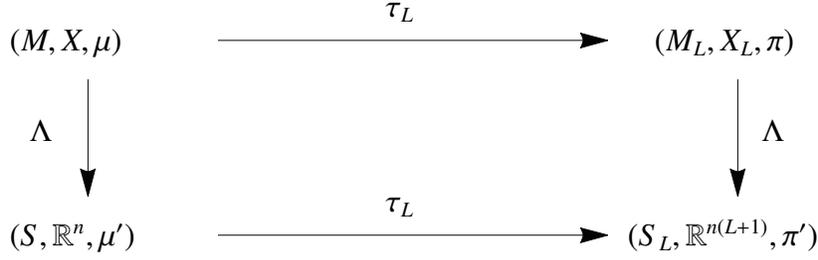}
    \put(-10,29.5){\large{$(M,X,\mu)$}}
    \put(86,29.5){\large{$(M_L,X_L,\pi)$}}
    \put(-10,0.5){\large{$(S,\mathbb{R}^{n},\mu^\prime)$}}
    \put(82,0.5){\large{$(S_L,\mathbb{R}^{n(L+1)},\pi^\prime)$}}

    \put(46,5.5){\large{$\tau_L$}}
    \put(46,34.5){\large{$\tau_L$}}
    \put(-7,16){\large{$\Lambda$}}
    \put(102,16){\large{$\Lambda$}}
    \end{overpic}
\vspace{0.25cm}
\caption{For the delay operator $\tau_L$ and the linearization operator $\Lambda$ defined by Equations ?? the diagram shown above commutes. Therefore, the order in which we linearize and delay the switched system $(M,X,\mu)$ does not matter.}
\label{fig:comm}
\end{figure}

In the following example we show how to construct the matrix $A_D$ of a system delayed by the matrix $D$.

\begin{example}\label{ex:DHmatrix}
Consider the system $(H,\mathbb{R}^3)$ in Example \ref{ex:2}, which can be written in vector form as
\[
H\left(
\begin{bmatrix}
x_1 \\
x_2 \\
x_3
\end{bmatrix}
\right) = \frac{1}{4}\begin{bmatrix} x_1 \\ x_2 \\ x_3\end{bmatrix}+
            \begin{bmatrix} -1 & 1 & -1\\
                            -1 & 1 & -1\\
                             1 & 0 & -1
          \end{bmatrix}
          \begin{bmatrix} \tanh(x_1) \\ \tanh(x_2) \\ \tanh(x_3)\end{bmatrix}-
          \begin{bmatrix} 0 \\ 1/2 \\ 0\end{bmatrix}.
\]
A Lipschitz matrix $A=[a_{ij}]\in\mathbb{R}^{3\times 3}$ for the function $H:\mathbb{R}^3\rightarrow\mathbb{R}^3$ using $\sup_{\mathbf{x}\in\mathbb{R}^3}|\partial H_i/\partial x_j(\mathbf{x})|=a_{ij}$ is
\[
A = \begin{bmatrix}
3/4 & 1 & 1 \\
1 & 5/4 & 1 \\
1 & 0 & 3/4
\end{bmatrix}.
\]
Using the delay matrix $D\in\mathbb{N}^{3\times 3}_1$ from Equation \eqref{eq:3} the delayed version $F_D:\mathbb{R}^6\rightarrow\mathbb{R}^6$ can be written in vector form as
\[
H_D\left(
\begin{bmatrix}
x_{1, 0} \\
x_{2, 0} \\
x_{3, 0} \\
x_{1, 1} \\
x_{2, 1} \\
x_{3, 1}
\end{bmatrix}
\right)
=
\frac{1}{4}
\begin{bmatrix}
1 & 0 & 0 & 0 & 0 & 0 \\
0 & 1 & 0 & 0 & 0 & 0 \\
0 & 0 & 1 & 0 & 0 & 0 \\
0 & 0 & 0 & 0 & 0 & 0 \\
0 & 0 & 0 & 0 & 0 & 0 \\
0 & 0 & 0 & 0 & 0 & 0 \\
\end{bmatrix}
\begin{bmatrix}
x_{1, 0} \\
x_{2, 0} \\
x_{3, 0} \\
x_{1, 1} \\
x_{2, 1} \\
x_{3, 1}
\end{bmatrix}
+
\begin{bmatrix}
-1 & 0 & 0 & 0 & 1 & -1 \\
-1 & 1 & 0 & 0 & 0 & -1 \\
1 & 0 & -1 & 0 & 0 & 0 \\
1 & 0 & 0 & 0 & 0 & 0 \\
0 & 1 & 0 & 0 & 0 & 0 \\
0 & 0 & 1 & 0 & 0 & 0 \\
\end{bmatrix}
\begin{bmatrix}
\tanh(x_{1, 0}) \\
\tanh(x_{2, 0}) \\
\tanh(x_{3, 0}) \\
x_{1, 1} \\
x_{2, 1} \\
x_{3, 1}
\end{bmatrix}
-
\begin{bmatrix}
0 \\
1/2 \\
0 \\
0 \\
0 \\
0
\end{bmatrix}.
\]
From Lemma \ref{lem:1} we can create the Lipschitz matrix $A_D$ of $H_D$ where
\[
A_D = \left[\begin{array}{ccc|ccc}
3/4 & 0 & 0 & 0 & 1 & 1 \\
1 & 5/4 & 0 & 0 & 0 & 1 \\
1 & 0 & 3/4 & 0 & 0 & 0 \\
\hline
1 & 0 & 0 & 0 & 0 & 0 \\
0 & 1 & 0 & 0 & 0 & 0 \\
0 & 0 & 1 & 0 & 0 & 0 \\
\end{array}\right]
\text{ in which }
A_0 = \begin{bmatrix}
3/4 & 0 & 0  \\
1 & 5/4 & 0  \\
1 & 0 & 3/4
\end{bmatrix}
\text{ and }
A_1 = \left[\begin{array}{ccc}
0 & 1 & 1 \\
0 & 0 & 1 \\
0 & 0 & 0
\end{array}\right].
\]
\end{example}

In Section \ref{sec:2} we said that a system $(F,X)$ is \textit{stable} if it has a globally attracting fixed point. Stability is more complicated for a switched system since there may be instances for which the system's state tends towards a single point, irrespective of its initial condition, but not for other instances. To define stability for a stochastically switched system we use the notion of $p^{th}$-mean stability.

\begin{definition}
\label{def:pthmeanstable}
{\textbf{\emph{($p^{th}$-mean stability)}}} We say that a stochastic switched system $(M, X, \mu)$ is exponentially stable in $p^{th}$ mean to a fixed point $\mathbf{\tilde{x}}$ if there exist real constants $C > 0$ and $\beta > 0$ such that for any initial condition $\mathbf{x}^0$ we have
\begin{align*}
    \mathbb{E}\left[||\mathbf{x}^k-\mathbf{\tilde{x}}||^p\right] \leq Ce^{-\beta k}||\mathbf{x}^0-\mathbf{\tilde{x}}||^p.
\end{align*}
\end{definition}

For simplicity, we say that such a system is \textit{$p^{th}$-mean stable}. Though it may be hard to verify that a nonlinear system is $p^{th}$-mean stable  we can use a generalization of the spectral radius called the $p$-radius to determine the stability of an associated linear stochastic switched system.

\begin{definition}
{\textbf{\emph{($p$-radius)}}}
\label{def:8}
For a linear stochastic switched system $(S, \mathbb{R}^n, \mu)$ (where $\mathbf{x}^{k+1} = A_k\mathbf{x}^k$ and each $A_k$ is drawn from $S$ with distribution $\mu$) and constant $p > 0$, the generalized joint spectral radius of $(S, \mathbb{R}^n, \mu)$ with exponent $p$ ($p$-radius for short) is
defined by
\begin{align*}
    \rho_{p, \mu} = \lim_{k \to \infty}(\mathbb{E}[||A_k\cdots A_1||]^p)^{1/pk}.
\end{align*}
\end{definition}

We cite a theorem from \cite{pradius} that says that under certain conditions on $S$ and $\mu$, the $p$-radius can be computed as the spectral radius of a single matrix. Sets of nonnegative matrices, such as the Lipschitz sets we are working with, satisfy these conditions.

Given a probability distribution $\mu$, we define the support of $\mu$ as the closed set supp $\mu$ such that \[\mu((\text{supp }\mu)^c) = 0\] and for any open set $G$ with $G \cap \text{supp }\mu \neq \emptyset$, we have $\mu(G \cap \text{supp } \mu) > 0$.

Next we given some definitions related to Kronecker products. The \emph{Kronecker product} $A \otimes B$ of two matrices $A \in \mathbb{R}^{m \times n}$ and $B \in \mathbb{R}^{q \times r}$ is defined as the $mq \times nr$ matrix
\begin{align*}
    A \otimes B =
    \begin{bmatrix}
    a_{11}B & \cdots & a_{1n}B \\
    \vdots & \ddots & \vdots \\
    a_{m1}B & \cdots & a_{mn}B
    \end{bmatrix}.
\end{align*}
The \emph{Kronecker power} $A^{\otimes p}$ is defined inductively by $A^{\otimes 1} = A$ and $A^{\otimes p} = A^{\otimes(p-1)} \otimes A$ for any integer $p > 1$.

\begin{theorem}{\textbf{\emph{(Computing $p$-radius)}}}
\label{thm:1}
If the support of
$\mu$ is bounded, and the pair $(\mu, p)$ satisfies either
\begin{enumerate}[label=(\arabic*)]
\item $p$ is an integer and $\text{supp }\mu$ leaves a proper cone invariant,
or
\item $p$ is an even integer
\end{enumerate}
then the $p$-radius of $(S, \mathbb{R}^n,\mu)$ is given by
\begin{align*}
    \rho_{p, \mu} = \rho(\mathbb{E}_\mu[S^{\otimes p}])^{1/p},
\end{align*}
where $S^{\otimes p} = \{ A^{\otimes p}: A \in S\}$
and the expectation is taken entrywise according to the distribution $\mu$ on $S$.
\end{theorem}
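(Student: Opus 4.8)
The plan is to show that the $p$-th moment $a_k := \mathbb{E}[\|A_k\cdots A_1\|^p]$ grows exactly like $\rho(\mathbb{E}_\mu[A^{\otimes p}])^k$, using the multiplicativity of Kronecker powers to linearize the problem, and then to read off the spectral radius by a Gelfand-type limit. I would first record that the defining limit exists: since the draws are independent and a submultiplicative norm is used, $a_{k+l}\le a_k a_l$, so Fekete's lemma gives $\lim_k a_k^{1/(pk)}=\inf_k a_k^{1/(pk)}$, with finiteness guaranteed by the boundedness of $\text{supp }\mu$. Thus $\rho_{p,\mu}$ is well defined and it suffices to identify this limit.

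The algebraic core is the mixed-product property $(A_k\cdots A_1)^{\otimes p}=A_k^{\otimes p}\cdots A_1^{\otimes p}$. For the cone case, take the cone to be the nonnegative orthant (a general proper cone reduces to this after choosing a suitable positive functional from the dual cone). Writing $s(M)=\mathbf 1^T M\mathbf 1$ for the entry-sum and using $\mathbf 1_{n^p}=\mathbf 1_n^{\otimes p}$, one has $s(M)^p=\mathbf 1^T M^{\otimes p}\mathbf 1$, so by linearity of expectation and independence
\[
\mathbb{E}\big[s(A_k\cdots A_1)^p\big]=\mathbf 1^T\,\mathbb{E}\big[A_k^{\otimes p}\cdots A_1^{\otimes p}\big]\,\mathbf 1=\mathbf 1^T\big(\mathbb{E}_\mu[A^{\otimes p}]\big)^k\mathbf 1.
\]
Because the matrices are nonnegative, $s(\cdot)$ is comparable to $\|\cdot\|$ up to constants depending only on $n$ and $p$, and here integrality of $p$ is essential so that $A^{\otimes p}$ is a genuine Kronecker power. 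The constants disappear after the $1/(pk)$ root, leaving $a_k$ with the same growth rate as $\mathbf 1^T(\mathbb{E}_\mu[A^{\otimes p}])^k\mathbf 1$.

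For the even case I would instead use the Euclidean norm and the identity $\|Mx\|_2^p=\|M^{\otimes p/2}x^{\otimes p/2}\|_2^2$, valid because $p/2$ is an integer and $\langle a^{\otimes m},b^{\otimes m}\rangle=\langle a,b\rangle^m$. Setting $B_j=A_j^{\otimes p/2}$ and $y=x^{\otimes p/2}$, the moment becomes $\mathbb{E}[\,y^T B_1^T\cdots B_k^T B_k\cdots B_1\, y\,]=y^T\mathcal L^k(I)y$, where $\mathcal L(P)=\mathbb{E}[B^T P B]$; the matrix of $\mathcal L$ in the $\mathrm{vec}$ representation is $\mathbb{E}[B\otimes B]=\mathbb{E}_\mu[A^{\otimes p}]$ up to a permutation similarity and transpose, neither of which alters the spectral radius. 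This routes the sign-indefinite case through a nonnegative-definite quadratic form and so dispenses with any positivity assumption on the matrices themselves.

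In both cases the growth rate is controlled by the single nonnegative matrix $\mathbb{E}_\mu[A^{\otimes p}]$: the bilinear form $\mathbf 1^T(\mathbb{E}_\mu[A^{\otimes p}])^k\mathbf 1$ (or $y^T\mathcal L^k(I)y$) is squeezed between constant multiples of $\|(\mathbb{E}_\mu[A^{\otimes p}])^k\|$, so Gelfand's formula yields $\lim_k\|(\mathbb{E}_\mu[A^{\otimes p}])^k\|^{1/k}=\rho(\mathbb{E}_\mu[A^{\otimes p}])$ and hence $\rho_{p,\mu}=\rho(\mathbb{E}_\mu[A^{\otimes p}])^{1/p}$. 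I expect the main obstacle to be precisely this squeezing step: showing that the chosen matrix norm is comparable to the Kronecker functional by constants that are uniform in $k$ (so that they vanish under the root). For the nonnegative orthant this is immediate, but verifying it for a general invariant proper cone and, separately, through the even-degree quadratic form is where the boundedness of $\text{supp }\mu$ and the two structural hypotheses genuinely do the work.
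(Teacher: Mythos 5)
There is nothing in the paper to compare against: Theorem \ref{thm:1} is not proved in this paper at all, but is imported verbatim from Ogura and Martin \cite{pradius} (``We cite a theorem from \cite{pradius}\dots''). So your proposal can only be judged on its own merits, and on those merits it is essentially the standard (and, as far as I can tell, the cited source's) argument: lift via the mixed-product property $(A_k\cdots A_1)^{\otimes p}=A_k^{\otimes p}\cdots A_1^{\otimes p}$, use independence to collapse $\mathbb{E}[A_k^{\otimes p}\cdots A_1^{\otimes p}]$ to $(\mathbb{E}_\mu[A^{\otimes p}])^k$, and recover the spectral radius by Gelfand after squeezing the $p$-th moment between constant multiples of a linear functional of $(\mathbb{E}_\mu[A^{\otimes p}])^k$. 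You also correctly read the paper's displayed definition of $\rho_{p,\mu}$ as a typo for $\mathbb{E}[\|A_k\cdots A_1\|^p]$; the theorem is false under the literal reading.

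Two places in your sketch deserve sharpening. First, a general proper cone does \emph{not} reduce to the nonnegative orthant (the circular cone in $\mathbb{R}^3$ is not linearly isomorphic to an orthant); what the argument actually needs is a pair $v\in\operatorname{int}K$, $\phi\in\operatorname{int}K^*$ with $c_1\|M\|\le\phi(Mv)\le c_2\|M\|$ uniformly over cone-preserving $M$, together with the fact that $\phi^{\otimes p}$ and $v^{\otimes p}$ play the same role for the tensor cone left invariant by $\mathbb{E}_\mu[A^{\otimes p}]$ --- this is exactly where hypothesis (1) is consumed, and it is more than a choice of ``a suitable positive functional.'' Second, in both cases you silently interchange $\sup_x\mathbb{E}[\|A_k\cdots A_1x\|^p]$ with $\mathbb{E}[\sup_x\|A_k\cdots A_1x\|^p]=a_k$; the needed reverse inequality $\|M\|^p\le C_{n,p}\max_i\|Me_i\|^p$ holds in finite dimensions, and in the even case the cleanest route from there to $\rho(\mathcal{L})$ is via $a_k\asymp\operatorname{tr}(\mathcal{L}^k(I))$ and the order-unit property of $I$ for the completely positive map $\mathcal{L}$, rather than through the rank-one tensors $x^{\otimes p/2}$ alone. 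With those two points made precise the proof goes through, and boundedness of $\operatorname{supp}\mu$ is indeed only needed to make the moments finite.
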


Now, we relate the stability of a nonlinear system to that of its corresponding Lipschitz system.

\begin{theorem}{\emph{\textbf{(Lipschitz Linearization)}}}
\label{thm:2}
Let $(M,X,\mu)$ be a stochastic switched system. If each $F \in M$ shares a fixed point $\mathbf{\tilde{x}}$, and there exists a corresponding Lipschitz system $(S, \mathbb{R}^n, \mu')$ with $\rho_{\mu,p}(S) < 1$, then $(M, X, \mu)$ is exponentially stable in $p^{th}$ mean to the fixed point $\mathbf{\tilde{x}}$.
\end{theorem}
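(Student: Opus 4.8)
The plan is to bound the expected iterated distance of the nonlinear system by the expected norm of products of Lipschitz matrices, and then invoke the hypothesis $\rho_{p,\mu'}(S) < 1$ together with the definition of the $p$-radius to extract the desired exponential decay. The key observation is that the Lipschitz condition (Definition \ref{def:5}) interacts well with the product-space max-norm: for any $F \in M$ with Lipschitz matrix $A = [a_{ij}]$, and any $\mathbf{x}, \mathbf{y} \in X$, we have $\|F_i(\mathbf{x}) - F_i(\mathbf{y})\| \le \sum_j a_{ij}\|x_j - y_j\|$. Collecting these componentwise inequalities into a vector inequality, if we write $\mathbf{v}(\mathbf{x}) = [\|x_1 - \tilde{x}_1\|, \ldots, \|x_n - \tilde{x}_n\|]^T \in \mathbb{R}^n_{\ge 0}$, then since $\mathbf{\tilde{x}}$ is a shared fixed point we obtain $\mathbf{v}(F(\mathbf{x})) \preceq A\,\mathbf{v}(\mathbf{x})$ entrywise. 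The nonnegativity of $A$ is what makes this inequality propagate monotonically under iteration.

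\smallskip
\noindent\textbf{First} I would establish this vector-majorization step carefully and then iterate it along an instance $\{F^{(k)}\}$. By monotonicity of multiplication by nonnegative matrices, composing $k$ maps with corresponding Lipschitz matrices $A^{(k)}, \ldots, A^{(1)}$ yields
\begin{align*}
\mathbf{v}(\mathscr{F}^k(\mathbf{x}^0)) \preceq A^{(k)} A^{(k-1)} \cdots A^{(1)}\, \mathbf{v}(\mathbf{x}^0).
\end{align*}
Since $\|\mathbf{x}^k - \mathbf{\tilde{x}}\| = \|\mathbf{v}(\mathbf{x}^k)\|_\infty$ and all entries are nonnegative, taking the max-norm and using that $\|A\mathbf{w}\|_\infty \le \|A\|\,\|\mathbf{w}\|_\infty$ for a submultiplicative operator norm gives $\|\mathbf{x}^k - \mathbf{\tilde{x}}\| \le \|A^{(k)} \cdots A^{(1)}\|\,\|\mathbf{x}^0 - \mathbf{\tilde{x}}\|$. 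Raising to the $p$th power and taking the expectation over the independent draws of the $F^{(k)}$ (equivalently the $A^{(k)}$ under $\mu'$), we obtain
\begin{align*}
\mathbb{E}\left[\|\mathbf{x}^k - \mathbf{\tilde{x}}\|^p\right] \le \mathbb{E}\left[\|A^{(k)} \cdots A^{(1)}\|^p\right]\,\|\mathbf{x}^0 - \mathbf{\tilde{x}}\|^p.
\end{align*}

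\smallskip
\noindent\textbf{Next}, I would convert the $p$-radius hypothesis into the exponential bound required by Definition \ref{def:pthmeanstable}. By Definition \ref{def:8}, $\rho_{p,\mu'} = \lim_{k\to\infty} \left(\mathbb{E}[\|A^{(k)}\cdots A^{(1)}\|^p]\right)^{1/(pk)}$. Since $\rho_{p,\mu'} < 1$ by assumption, I can fix any $r$ with $\rho_{p,\mu'} < r < 1$; then for all sufficiently large $k$ we have $\mathbb{E}[\|A^{(k)}\cdots A^{(1)}\|^p] \le r^{pk}$. Absorbing the finitely many small-$k$ terms into a constant $C$ and setting $\beta = -p\log r > 0$, we conclude $\mathbb{E}[\|\mathbf{x}^k - \mathbf{\tilde{x}}\|^p] \le C e^{-\beta k}\|\mathbf{x}^0 - \mathbf{\tilde{x}}\|^p$, which is precisely exponential $p$th-mean stability.

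\smallskip
\noindent\textbf{The main obstacle} I anticipate is the passage from the limit superior definition of the $p$-radius to a clean uniform bound $\mathbb{E}[\|A^{(k)}\cdots A^{(1)}\|^p] \le r^{pk}$ valid for all $k$ (not just asymptotically), and making precise that the limit in Definition \ref{def:8} genuinely controls the finite-$k$ expectations — this requires either an appeal to submultiplicativity of the sequence $\mathbb{E}[\|A^{(k)}\cdots A^{(1)}\|^p]$ (Fekete-type subadditivity in the logarithm, ensuring the limit equals the infimum and hence dominates the tail uniformly after absorbing early terms), or a direct argument that the norm one chooses is consistent with the operator norm implicit in the $p$-radius definition. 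A secondary subtlety is checking that the same Lipschitz matrix $A$ assigned to each $F$ in the construction of $S$ (Definition \ref{def:6}) is the one whose distribution $\mu'$ enters the $p$-radius, so that the independence structure of the switching carries over verbatim to the matrix products; this is bookkeeping but must be stated to justify taking the expectation factor-by-factor.
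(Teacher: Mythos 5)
Your proposal is correct and follows essentially the same route as the paper: both couple the nonlinear orbit to the linear Lipschitz system through the vector of componentwise distances $\bigl[\|x_1-\tilde{x}_1\|,\dots,\|x_n-\tilde{x}_n\|\bigr]^T$, propagate the entrywise majorization through the matrix products, and then convert $\rho_{p,\mu'}(S)<1$ into the exponential bound of Definition \ref{def:pthmeanstable}. The only cosmetic difference is that the paper invokes the implication ``$p$-radius less than one implies exponential $p$th-mean decay of the linear system'' as a known fact from the $p$-radius literature, whereas you unpack it directly from Definition \ref{def:8} (correctly flagging, and resolving, the passage from the limit to a uniform tail bound by absorbing finitely many terms into the constant $C$).
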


\begin{proof} Assume the hypothesis. Definition \ref{def:2} implies $\mathbf{\tilde{x}}$ is a fixed point of $(M, X, \mu)$.
Since $\rho_{\mu,p}(S) < 1$, there exist real constants $C > 0$ and $\beta > 0$ such that for any initial condition $\mathbf{y}^0\in\mathbb{R}^n$ we have $\mathbb{E}\left[||\mathbf{y}^k||^p\right] \leq Ce^{-\beta k}||\mathbf{y}^0||^p$.
Let $\mathbf{x}^0\in X$ be given.
Set
\[
\mathbf{y}^0=
\begin{bmatrix}
||\mathbf{x}^0_1-\mathbf{\tilde{x}}_1|| \\
||\mathbf{x}^0_2-\mathbf{\tilde{x}}_2|| \\
\vdots \\
||\mathbf{x}^0_n-\mathbf{\tilde{x}}_n||
\end{bmatrix}.
\]
Then $||\mathbf{x}^0-\mathbf{\tilde{x}}||=||\mathbf{y}^0||$ by Definition \ref{def:1} and $||\mathbf{x}^1-\mathbf{\tilde{x}}||\le||\mathbf{y}^1||$ by Definitions \ref{def:1} and \ref{def:5}.
Thus $\mathbb{E}_\mu\left[||\mathbf{x}^1-\mathbf{\tilde{x}}||^p\right]\le\mathbb{E}_{\mu'}\left[||\mathbf{y}^1||^p\right]$ by Definition \ref{def:6}.
Inductively we have
\begin{align*}
    \mathbb{E}_\mu\left[||\mathbf{x}^k-\mathbf{\tilde{x}}||^p\right] &\le \mathbb{E}_{\mu'}\left[||\mathbf{y}^k||^p\right] \\
    &\le Ce^{-\beta k}||\mathbf{y}^0||^p \\
    &= Ce^{-\beta k}||\mathbf{x}^0-\mathbf{\tilde{x}}||^p
\end{align*}
so $(M, X, \mu)$ is exponentially stable in $p^{th}$ mean to $\mathbf{\tilde{x}}$ as claimed.
\end{proof}

Now that we have established tools for determining first-mean stability of nonlinear switched systems, we focus on determining which of these systems will remain stable even when time delays are introduced. We refer to such systems as ``patiently stable'', as they cannot be destabilized by bounded time-delays of any kind.

\begin{definition}
\label{def:patientstability}
\textbf{(Patient First Mean Stability)}
A stochastic switched system $(M, X, \mu)$ is \emph{patiently first-mean stable} if $(M, X, \mu)$ as well as all of its delayed versions $(M_L, X_L, \pi)$ are first-mean stable.
\end{definition}

Our main result (Theorem \ref{thm:3}) gives a sufficient condition for patient first mean stability. In order to prove our main result (see Theorem \ref{thm:3}) we will need to relate the spectral radius of the matrix $A_D$ in Lemma \ref{lem:1} with the spectral radius of the matrix $B=\sum_{\ell=0}^L A_\ell$. To do this we use a matrix transform referred to as an isoradial matrix reduction, or the Perron compliment \cite{Mey89}. An isoradial reduction is a special type of isospectral reduction that preserves the matrix' spectral radius while reducing its size. (See \cite{Smith19,tv:BunWebb2} for details.)

To define an isoradial matrix reduction we require the following. For a matrix $M\in\mathbb{R}^{n\times n}$ let $N=\{1,\ldots,n\}$. If the sets $R,C\subset N$ are proper subsets of $N$, we denote by $M_{RC}$ the $|R| \times |C|$ \emph{submatrix} of $M$ with rows indexed by $R$ and columns indexed by $C$. We denote the subset of $N$ not contained in $S$ by $\bar{S}$, that is $\bar{S}$ is the \emph{complement} of $S$ in $N$. Using this notation, an isoradial reduction of a square real-valued matrix is defined as follows.

\begin{definition} \textbf{(Isoradial Reductions)}
The \emph{isoradial reduction} of a matrix $M\in\mathbb{R}^{n\times n}$ over a nonempty subset $S\subset N$ is the matrix
\begin{equation}\label{eqn:IRR}
\mathcal{I}_{S}(M)=M_{SS}-M_{S\bar{S}}\left(M_{\bar{S}\bar{S}}-\rho(M) I\right)^{-1}M_{\bar{S}S}\in\mathbb{R}^{|S|\times|S|},
\end{equation}
where $\rho(M) = \max \{|\lambda |: \lambda \textrm{ is an eigenvalue of } M\}$ is the \emph{spectral radius} of $M$.
\end{definition}

The isoradial reduction $\mathcal{I}_S(M)$ does not exist for every square real-valued matrix $M$ and every subset $S \subseteq N$, due to the fact that the inverse taken in Equation \eqref{eqn:IRR} may not exist. If the isoradial reduction $\mathcal{I}_{S}(M)$ exists then
\begin{equation}\label{eq:isorad}
\rho(\mathcal{I}_{S}(M))=\rho(M).
\end{equation}

Using equation \eqref{eq:isorad} we can prove the following lemma.

\begin{lemma}\label{reduce} Assume $A\in\mathbb{R}^{n(L+1) \times n(L+1)}$ is a nonnegative matrix given in Lemma \ref{lem:1} of the form
\begin{align*}
    A =
    \begin{bmatrix}
    A_0 & A_1 & \ldots & A_{L-1} & A_L \\
    I_n & 0 & \ldots & 0 & 0 \\
    0 & I_n & \ldots & 0 & 0 \\
    \vdots & \vdots & \ddots & \vdots & \vdots \\
    0 & 0 & \ldots & I_n & 0
    \end{bmatrix}
\end{align*}
where each $A_{ij}\in\mathbb{R}^{n\times n}$ and $I_n\in\mathbb{R}^{n\times n}$ is the identity matrix.
Then $\rho(A)<1$ if and only if $\rho(\sum_{\ell=0}^L A_\ell)<1$.
\end{lemma}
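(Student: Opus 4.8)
The plan is to apply an isoradial reduction to $A$ over the index set $S = \{1,\dots,n\}$ corresponding to the topmost block row and column, read off a fixed-point identity for $\rho(A)$, and then compare that identity against the threshold $1$. Writing $r = \rho(A)$ and partitioning $A$ as in the statement, the relevant submatrices are $A_{SS} = A_0$, $A_{S\bar S} = [\,A_1\ \cdots\ A_L\,]$, the first block column $A_{\bar S S} = [\,I_n\ 0\ \cdots\ 0\,]^T$, and $A_{\bar S\bar S}$ equal to the $nL\times nL$ nilpotent block-subdiagonal shift. First I would solve $(A_{\bar S\bar S}-rI)w = A_{\bar S S}$ blockwise; because the shift is nilpotent the back-substitution gives the clean closed form $w_\ell = -r^{-\ell}I_n$, so that substituting into \eqref{eqn:IRR} yields
\[
\mathcal{I}_S(A) = A_0 - A_{S\bar S}(A_{\bar S\bar S}-rI)^{-1}A_{\bar S S} = \sum_{\ell=0}^L r^{-\ell}A_\ell.
\]

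Assuming for the moment that $r>0$ (so that $A_{\bar S\bar S}-rI$ is invertible and the reduction exists), equation \eqref{eq:isorad} then gives the key identity $\rho\!\left(\sum_{\ell=0}^L r^{-\ell}A_\ell\right) = r$. To convert this single equality into the desired threshold equivalence I would introduce $\phi(t) = \rho\!\left(\sum_{\ell=0}^L t^{-\ell}A_\ell\right)$ for $t>0$. Here the nonnegativity hypothesis on $A$ is essential: each $\sum_\ell t^{-\ell}A_\ell$ is a nonnegative matrix whose entries are nonincreasing in $t$, so by monotonicity of the Perron root on nonnegative matrices $\phi$ is nonincreasing, and hence $\psi(t)=\phi(t)-t$ is strictly decreasing with a unique zero, which the identity above pins at $t=r$. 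Evaluating at $t=1$ gives $\psi(1) = \rho(B)-1$ where $B = \sum_{\ell=0}^L A_\ell$, and since $\psi$ is strictly decreasing and vanishes exactly at $r$ we obtain $\psi(1)<0 \iff 1>r$, that is, $\rho(B)<1 \iff \rho(A)<1$, as claimed.

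It remains to dispose of the degenerate case $r=0$, for which the reduction in \eqref{eqn:IRR} is undefined. Here I would argue directly at the level of the weighted digraphs of $A$ and $B$: a cycle passing through the top block of $A$ projects onto a cycle of $B$ and conversely, since $(B)_{ij}>0$ exactly when some $(A_\ell)_{ij}>0$, so $A$ has a positive-weight cycle if and only if $B$ does; for nonnegative matrices this means $\rho(A)=0 \iff \rho(B)=0$, and in this case both sides of the biconditional hold. I expect the block-inverse computation to be routine once the nilpotent shift structure is exploited; the genuine content, and the step most likely to need care, is the monotone fixed-point argument that upgrades the isoradial identity at $t=r$ to the comparison at $t=1$, together with checking that the nonnegativity of $A$ is precisely what makes $\phi$ monotone.
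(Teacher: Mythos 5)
Your proof is correct and follows essentially the same route as the paper: the isoradial reduction over the top block yields $\mathcal{I}_S(A)=\sum_{\ell=0}^L \rho(A)^{-\ell}A_\ell$, and your comparison at $t=1$ via entrywise monotonicity of the Perron root is exactly the paper's two-case inequality argument ($\alpha<1$ versus $\alpha\geq 1$), repackaged as the observation that $t\mapsto\rho\big(\sum_{\ell}t^{-\ell}A_\ell\big)-t$ is strictly decreasing and vanishes at $t=\rho(A)$. The only genuine divergence is the degenerate case $\rho(A)=0$, which the paper handles analytically by perturbing $A_0$ to $A_0+\epsilon I$ and letting $\epsilon\to 0^+$, whereas you argue combinatorially that the digraphs of $A$ and $\sum_{\ell}A_\ell$ have cycles simultaneously; both arguments are valid.
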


\begin{proof}
Let $\rho(A)=\alpha$ and suppose $0<\alpha<1$. We choose $S=\{1,2,\dots,n\}$ in which case
\[
A_{SS}=A_0, \quad
A_{S\bar{S}}=
\begin{bmatrix}
    A_1 & \ldots & A_{L-1} & A_L
\end{bmatrix}, \quad
A_{\bar{S}\bar{S}}=
\begin{bmatrix}
     0 & \ldots & 0 & 0 \\
     I_n & \ldots & 0 & 0 \\
     \vdots & \ddots & \vdots & \vdots \\
     0 & \ldots & I_n & 0
\end{bmatrix}, \text{ and }
A_{\bar{S}S}=
\begin{bmatrix}
    I_n\\
    0\\
    \vdots\\
    0
    \end{bmatrix}.
\]
The isoradial reduction of $A$ over the set $S$ is then
\begin{align*}
\mathcal{I}_{S}(A)=&A_{SS}-A_{S\bar{S}}\left(A_{\bar{S}\bar{S}}-\rho(A) I\right)^{-1}M_{\bar{S}S}\\
                  =&A_0-
\begin{bmatrix}
A_1 & \ldots & A_{L-1} & A_L
\end{bmatrix}
\begin{bmatrix}
-\alpha I_n & \ldots & 0 & 0 \\
I_n & -\alpha I_n  & 0 & 0 \\
\vdots & \ddots & \ddots & \vdots \\
0 & \ldots & I_n & -\alpha I_n
\end{bmatrix}^{-1}
\begin{bmatrix}
    I_n\\
    0\\
    \vdots\\
    0
    \end{bmatrix}.
\end{align*}
Since $\alpha>0$ the matrix $A_{\bar{S}\bar{S}}-\alpha I$ is invertible with inverse
\[
\begin{bmatrix}
-\alpha I_n & \ldots & 0 & 0 \\
I_n & -\alpha I_n  & 0 & 0 \\
\vdots & \ddots & \ddots & \vdots \\
0 & \ldots & I_n & -\alpha I_n
\end{bmatrix}^{-1}=
\begin{bmatrix}
-\alpha^{-1} I_n & 0 & 0 & \ldots & 0 \\
-\alpha^{-2} I_n & -\alpha^{-1} I_n  & 0 & \ldots & 0 \\
-\alpha^{-3} I_n & -\alpha^{-2} I_n  & -\alpha^{-1} I_n & \ldots & 0 \\
\vdots & \vdots & \vdots & \ddots & \vdots \\
-\alpha^{-L} & -\alpha^{-L+1} I_n & -\alpha^{-L+2} I_n & \ldots & -\alpha^{-1} I_n
\end{bmatrix}.
\]
Combining this with the previous equation we have
\begin{align*}
\mathcal{I}_{S}(A)=&A_0-
\begin{bmatrix}
A_1 & \ldots & A_{L-1} & A_L
\end{bmatrix}
\begin{bmatrix}
-\alpha^{-1} I_n & 0 & 0 & \ldots & 0 \\
-\alpha^{-2} I_n & -\alpha^{-1} I_n  & 0 & \ldots & 0 \\
-\alpha^{-3} I_n & -\alpha^{-2} I_n  & -\alpha^{-1} I_n & \ldots & 0 \\
\vdots & \vdots & \vdots & \ddots & \vdots \\
-\alpha^{-L} & -\alpha^{-L+1} I_n & -\alpha^{-L+2} I_n & \ldots & -\alpha^{-1} I_n
\end{bmatrix}
\begin{bmatrix}
    I_n\\
    0\\
    0\\
    \vdots\\
    0
    \end{bmatrix}\\
=& A_0 + \sum_{\ell=1}^L\alpha^{-\ell}A_{\ell}=\sum_{\ell=0}^L\alpha^{-\ell}A_\ell.
\end{align*}
By Equation \eqref{eq:isorad} we have $\rho(\sum_{\ell=0}^L\alpha^{-\ell}A_\ell)=\rho(A)=\alpha$. Note that entrywise the matrix $\sum_{\ell=0}^L A_\ell\preceq\sum_{\ell=0}^L\alpha^{-\ell}A_\ell$ since each $A_i$ is nonnegative and $\alpha<1$. By Proposition 3.3 in \cite{tv:BunWebb2} it follows that
\begin{equation}\label{eq:firstin}
\rho\left(\sum_{\ell=0}^L A_\ell\right))\leq\rho\left(\sum_{\ell=0}^L\alpha^{-\ell}A_\ell\right)=\rho(A)<1.
\end{equation}
Therefore, if $0<\rho(A)<1$ then $\rho(\sum_{\ell=0}^L A_\ell)<1$.

If it is the case that $\rho(A)=0$ then the claim is that $\rho(\sum_{\ell=0}^L A_\ell)=0$. To see this suppose $\rho(A)=0$. For $\epsilon>0$ let $A_{\epsilon}$ be the matrix $A$ where we replace $A_0\in\mathbb{R}^{n\times n}$ with the matrix $A_0+\epsilon I\in\mathbb{R}^{n\times n}$. Since $0\preceq A_{\epsilon}\preceq A+\epsilon I$ and $\rho(A+\epsilon I)=\rho(A)+\epsilon=\epsilon$ then Proposition 3.3 in \cite{tv:BunWebb2} implies $\rho(A_\epsilon)\leq\epsilon$. Choosing $\epsilon<1$ then
\[
\rho(\sum_{\ell=0}^L A_\ell)\leq\rho\left((A_0+\epsilon I)+\sum_{\ell=1}^L A_{\ell}\right)\leq\rho(A_\epsilon)\leq\epsilon
\]
where the first inequality follows from Proposition 3.3 and the second from Equation \eqref{eq:firstin} using $A_{\epsilon}$ instead of $A$. Letting $\epsilon\rightarrow0^+$ it follows that $\rho(\sum_{\ell=0}^L A_\ell)=0$. Thus, if $\rho(A)=0$ then $\rho(\sum_{\ell=0}^L A_\ell)=0$. Combining this with the previous result, if $\rho(A)<1$ then $\rho(\sum_{\ell=0}^L A_\ell)<1$.

To prove the converse, note that if $\alpha\geq 1$ then entrywise $\rho(\sum_{\ell=0}^L\alpha^{-\ell}A_\ell)\preceq\sum_{\ell=0}^L A_\ell$ and Proposition 3.3 in \cite{tv:BunWebb2} implies that
\[
\rho\left(\sum_{\ell=0}^L A_\ell\right)\geq\rho\left(\sum_{\ell=0}^L\alpha^{-\ell}A_\ell\right)=\alpha\geq1
\]
so that $\rho(\sum_{\ell=0}^L A_\ell)\geq 1$. Therefore, if $\rho(\sum_{\ell=0}^L A_\ell)<1$ then it is not possible for $\alpha\geq1$ implying that if $\rho(\sum_{\ell=0}^L A_\ell)<1$ then $\rho(A)<1$. This completes the proof.
\end{proof}

For example, the matrix $A_D$ in Example \ref{ex:DHmatrix} has the spectral radius $\rho(A_D)=2.0239$, which is greater than 1. Hence, Lemma \ref{reduce} implies the same is true of $\rho(\sum_{\ell=0}^1 A_\ell)=(6+\sqrt{17})/4$.

\begin{theorem}\label{thm:3}


Let $(M, X, \mu)$ be a stochastic switched system where the support of $\mu$ is bounded and each $F \in M$ shares a fixed point $\tilde{x}\in X$. If there is a Lipschitz system $(S, \mathbb{R}^n, \mu')$ of $(M, X, \mu)$ that is exponentially stable in first-mean, or if there is a delayed version $(M_L, X_L, \mu)$ with corresponding Lipschitz system $(S_L, \mathbb{R}^{(L(n+1)}, \pi')$ that is exponentially stable in first-mean, then $(M, X, \mu)$ is patiently first-mean stable.
\end{theorem}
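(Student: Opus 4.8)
The plan is to reduce both hypotheses to the single scalar condition $\rho(B) < 1$, where $B = \mathbb{E}_{\mu'}[S]$ is the entrywise expectation of the undelayed Lipschitz set, and then to show that this condition forces every delayed version to be first-mean stable via Theorem \ref{thm:2}. Throughout, first-mean stability of a linear stochastic switched system is identified with its $1$-radius being less than $1$; since every Lipschitz matrix is nonnegative, the support of each distribution leaves the nonnegative orthant (a proper cone) invariant, so Theorem \ref{thm:1} applies with $p = 1$ and gives $\rho_{1,\mu'}(S) = \rho(\mathbb{E}_{\mu'}[S]) = \rho(B)$, and likewise $\rho_{1,\pi'}(S_L) = \rho(\mathbb{E}_{\pi'}[S_L])$ for any delayed version.

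The key computation is to evaluate $\mathbb{E}_{\pi'}[S_L]$. First I would record the identity $\sum_{\ell=0}^L A_\ell = A$ for every base matrix $A \in S$ and every delay matrix $D$: since $A_\ell = [a_{ij}\mathbbm{1}_{d_{ij}=\ell}]$ and each entry $d_{ij}$ lies in exactly one delay level, the block sum recovers $A$ independently of $D$. Because the subdiagonal identity blocks in $A_D$ are constant across all $A_D \in S_L$, the expectation $\mathbb{E}_{\pi'}[S_L]$ is again of the block-companion form of Lemma \ref{reduce}, with top-row blocks $\bar{A}_\ell = \mathbb{E}_{\pi'}[A_\ell]$. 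Summing and using the marginalization property $\sum_{D} \pi'(A_D) = \mu'(A)$ guaranteed by the construction of a delayed version, I get
\begin{align*}
\sum_{\ell=0}^L \bar{A}_\ell = \mathbb{E}_{\pi'}\Big[\sum_{\ell=0}^L A_\ell\Big] = \mathbb{E}_{\pi'}[A] = \sum_{A \in S}\Big(\sum_D \pi'(A_D)\Big)A = \sum_{A \in S}\mu'(A)\,A = B.
\end{align*}
Thus Lemma \ref{reduce}, applied to the nonnegative block-companion matrix $\mathbb{E}_{\pi'}[S_L]$, yields the crucial equivalence $\rho(\mathbb{E}_{\pi'}[S_L]) < 1 \iff \rho(B) < 1$, valid for every delayed version regardless of how $\pi$ distributes the delays.

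With this in hand the two hypotheses collapse to the same condition. If the undelayed Lipschitz system is first-mean stable then $\rho(B) < 1$ directly; if instead some delayed Lipschitz system $(S_L, \mathbb{R}^{n(L+1)}, \pi')$ is first-mean stable then $\rho(\mathbb{E}_{\pi'}[S_L]) < 1$, and the equivalence above again gives $\rho(B) < 1$. It then remains to propagate $\rho(B) < 1$ to stability of every delayed system. I would check that the repeated point $\tilde{x}_L$ obtained by placing $\tilde{x}$ in every delay layer is a common fixed point of all $F_D \in M_L$ (the shift blocks reproduce $\tilde{x}$ in each layer and the top block gives $F_i(\tilde{x}) = \tilde{x}_i$), so the fixed-point hypothesis of Theorem \ref{thm:2} holds. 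Since $\rho_{1,\pi'}(S_L) = \rho(\mathbb{E}_{\pi'}[S_L]) < 1$ for every such version, Theorem \ref{thm:2} makes each delayed system $(M_L, X_L, \pi)$ first-mean stable; applying the same theorem to the undelayed system, where $\rho_{1,\mu'}(S) = \rho(B) < 1$ and $\tilde{x}$ is the shared fixed point, gives first-mean stability of $(M, X, \mu)$ itself. Together these are exactly patient first-mean stability in the sense of Definition \ref{def:patientstability}.

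The main obstacle I anticipate is the middle computation: establishing that $\mathbb{E}_{\pi'}[S_L]$ retains the exact block-companion form of Lemma \ref{reduce} and that its block-row sum equals $B$ independently of the delay distribution $\pi$. This is what decouples stability from the delay structure, and it rests on two facts that must be handled carefully, namely the entrywise identity $\sum_{\ell} A_\ell = A$ and the marginalization $\sum_D \pi'(A_D) = \mu'(A)$, together with verifying the hypotheses (bounded support, invariant proper cone) that license the $p$-radius formula of Theorem \ref{thm:1} at $p = 1$ for both $S$ and $S_L$.
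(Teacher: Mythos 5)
Your proposal is correct and follows essentially the same route as the paper's proof: reduce both hypotheses to $\rho(\mathbb{E}_{\mu'}[S])<1$ via Theorem \ref{thm:1} at $p=1$, show the block-row sum of $\mathbb{E}_{\pi'}[S_L]$ equals $\mathbb{E}_{\mu'}[S]$ using the marginalization property of $\pi'$, apply Lemma \ref{reduce} for the spectral-radius equivalence, and conclude with Theorem \ref{thm:2}. Your direct computation via $\sum_{\ell}A_\ell=A$ is a cleaner packaging of the paper's entrywise indicator/pushforward argument, and your explicit check that $\tilde{x}$ lifts to a common fixed point of the delayed maps is a detail the paper leaves implicit.
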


Before we give a proof of Theorem \ref{thm:3}, we recall the systems in Example \ref{ex:1} and now answer the question of why some of these systems can be destabilized by time delays and others remain stable when time delayed.

\begin{example}\label{ex:3}
We return to the systems defined in Example \ref{ex:1} and examine their stability under stochastic delays. For $M = \{F, G\}$, we have Lipschitz set $S = \left\{\begin{bmatrix} \epsilon & 1 \\ 0 & \epsilon \end{bmatrix}, \begin{bmatrix} \epsilon & 0 \\ 1 & \epsilon \end{bmatrix}\right\}$ with $\epsilon=1/2$. For the system $(M,X,\mu_1)$ with no switching we have
\begin{align*}
    E_{\mu_1}[S] = \begin{bmatrix} \epsilon & 1 \\ 0 & \epsilon \end{bmatrix}
\end{align*}
and $\rho(E_{\mu_1}[S]) = 1/2<1$. Thus, $(M, X, \mu_1)$ will be stable under any stochastic switching between time-delayed versions of $F$ with bounded time-delays.

For $(M,X,\mu_2)$, we have
\begin{align*}
    E_{\mu_2}[S] = \begin{bmatrix} \epsilon & 9/10 \\ 1/10 & \epsilon \end{bmatrix}
\end{align*}
for which $\rho(E_{\mu_2}[S]) = 4/5<1$, so any stochastically time-delayed version of $(M, X, \mu_2)$ is first-mean stable, although the convergence of the system is slower than that of $(M, X, \mu_1)$.

Lastly, for $(M,X,\mu_3)$ we have
\begin{align*}
    E_{\mu_3}[S] = \begin{bmatrix} \epsilon & 1/2 \\ 1/2 & \epsilon \end{bmatrix}
\end{align*}
which has spectral radius $\rho(E_{\mu_3}[S]) = 1$, so stochastically time-delayed versions of $(M, X, \mu_3)$ are not guaranteed to be first mean stable.
\end{example}

We now give a proof of Theorem \ref{thm:3}.

\begin{proof}
Let $(M, X, \mu)$ be a stochastic switched system with corresponding Lipschitz system $(S, \mathbb{R}^n, \mu')$ such that the assumptions of Theorem \ref{thm:3} are satisfied. Let $L > 0$ and let $(M_L, \mathbb{R}^{n(L+1)}, \pi)$ be any delayed version of $(M, X, \mu)$.

 We determine the stability of $(M_L, X_L, \mathbb{R}^{n(L+1)})$ using the system's corresponding Lipschitz set $S_L$. We construct $(S_L, \mathbb{R}^{n(L+1)}, \pi')$ from $(S, \mathbb{R},\mu')$ using Lemma \ref{lem:1}. We then use Theorem \ref{thm:1} to determine the stability of $(S_L, \mathbb{R}^{n(L+1)}, \pi')$, which by Theorem \ref{thm:2} tells us the stability of $(M_L, X_L, \pi)$.

 First, we use Lemma \ref{lem:1} to construct the Lipschitz set $(S_L, \mathbb{R}^{n(L+1)}, \pi')$ of $(M_L, X_L, \pi)$ from the Lipschitz set of the undelayed system, $(S, R^n, \mu)$. We have $S_L = \{A_D: A \in S, D \in\mathbb{N}^{n\times n}_L\}$, where each $A_D$ is of the form defined in Lemma \ref{lem:1}.

 Since $S_L$ is a linear system, we determine the 1st mean stability of $S_L$ using the 1-radius (Definition \ref{def:8}). Furthermore, $S_L$ is comprised of nonnegative matrices. The set of nonnegative vectors in $\mathbb{R}^n$, i.e. $\{\mathbf{x} \in \mathbb{R}^n: \mathbf{x} \succeq \mathbf{0}\}$ is a proper cone because it is closed, convex, solid, and pointed. Since $S_L$ is a set of nonnegative Lipschitz matrices, each matrix leaves this set invariant, and therefore satisfies the assumptions of Theorem \ref{thm:1}. We can therefore apply Theorem \ref{thm:1} to compute the 1-radius as $\rho_{1, \pi'}(S_L) = \rho(E_{\pi'}[S_L])$. Thus, we proceed by computing $E_{\pi'}[S_L]$.

Note that each $A_D \in S_L$ has the same block subdiagonal form (as defined in Lemma \ref{lem:1}). The expectation is taken entrywise, so $E[S_L]$ will have the same block subdiagonal form as well, as follows:
\begin{align*}
    E_{\pi'}[S_L] =
    \begin{bmatrix}
    B_0 & B_1 & \ldots & B_{L-1} & B_L\\
    I_n & 0 & \ldots & 0 & 0 \\
    0 & I_n & \ldots & 0 & 0 \\
    \vdots & \vdots & \ddots & \vdots & \vdots \\
    0 & 0 & \ldots & I_n & 0
    \end{bmatrix},
\end{align*}

where each $B_l$ is a $n \times n$ matrix.

$E_{\pi'}[S_L]$ is now in the form required to place an upper bound on $\rho(E_{\pi'}[S_L])$ using Lemma \ref{reduce}, so we compute $\sum_{\ell=0}^L B_{\ell}$ to apply the lemma.



Note that given any $A_D \in S_L$, we can uniquely determine which $n \times n$ matrix $A \in S$ that $A_D$ came from.
Thus, we define the mapping $f: S_L \to S$ by $f(A_D) = A$, where $A$ is the unique matrix in $S$ of which $A_D$ is a delayed version. Since $f$ is measurable, we can also use $f$ to define the measure $f_{*}(\pi')$ on $S$ given by
\begin{align*}
   f_{*}(\pi')(E) = \pi'(\left\{A_D \in S_L : f(A_D) \in E)\right\})
   \quad \text{ for } E \in \mathscr{A}.
\end{align*}

What's more, we can show that $f_*\pi' = \mu'$. For any $E \in \mathscr{A}$, we have $$f_*(\pi')(E) = \pi'(\left\{A_D \in S_L : f(A_D) \in E)\right\}) = \pi'(\left\{ A_D: A \in E, D \in \mathbb{N}_L^{n \times n} \right\})
= \pi'(E_L) = \mu'(E).$$.

Then, using a change of variables, we have
\begin{align*}
    \int_{S_L} f(A_D) \: d\pi' = \int_S A \: df_*(\pi') = \int_S A \: d\mu' = E_{\mu'}[S].
\end{align*}

Recall that the $i,j+n\ell$ entry of $A_D \in S_L$ is $f(A_D)_{ij} = a_{ij}$ if $d_{i,j} = \ell$ and 0 otherwise, where $A = [a_{ij}]$ is the unique undelayed version of $A_D$, and $D = [d_{ij}]$ is a delay matrix such that delaying $A$ with $D$ results in $A_D$. We have
\begin{align*}
    \left[\sum_{\ell = 0}^L B_{\ell}\right]_{ij}
    =  \sum_{\ell = 0}^L [B_{\ell}]_{ij} \\
    = \sum_{\ell=0}^L \left[E_{\pi'}[S_L]\right]_{i, j+n\ell} \\
    = \sum_{\ell = 0}^L \int_{S_L} [A_D]_{i,j+n\ell} \: d\pi' \\
    = \sum_{\ell = 0}^L \int_{S_L} \left[f(A_D)\right]_{ij}1_{d_{ij} = l} \: d\pi' \\
    = \int_{S_L} \left[f(A_D)\right]_{ij} \sum_{\ell = 0}^L 1_{d_{ij} = l} \: d\pi' \\
    = \int_{S_L} \left[f(A_D)\right]_{ij}  \: d\pi' \\
    = \int_S a_{ij} \: d\mu' \\
    = \left[E_{\mu'}[S]\right]_{ij}.
\end{align*}

Therefore $\sum_{l=0}^L B_l = E_{\mu'}[S]$. By Lemma \ref{reduce},
\begin{align*}
    E_{\pi'}[S_L] =
    \begin{bmatrix}
    B_0 & B_1 & \ldots & B_{L-1} & B_L\\
    I_n & 0 & \ldots & 0 & 0 \\
    0 & I_n & \ldots & 0 & 0 \\
    \vdots & \vdots & \ddots & \vdots & \vdots \\
    0 & 0 & \ldots & I_n & 0
    \end{bmatrix}.
\end{align*}
has spectral radius less than 1 if and only if $\sum_{l=0}^L B_l = E_{\mu'}[S]$ has spectral radius less than 1. Since we assumed that $(S, \mathbb{R}^n, \mu')$ was exponentially stable in first mean, $\rho_{(1, \mu')}(S) = \rho(E_{\mu'}[S]) < 1.$ We conclude that $\rho(E_{\pi'}[S_L]) = \rho_{1, \pi'}(S_L) < 1$. Thus $(S_L, \mathbb{R}^{n(L+1)}, \pi')$ is exponentially stable in first mean, and by Theorem \ref{thm:2}, so is $(M_L, X_L, \pi)$. That is, if we have a stochastic switched system with a Lipschitz system that is exponentially stable in first mean, any stochastically delayed version of that system will maintain that stability.

Now, suppose that instead of knowing of a first mean stable Lipschitz set for $(M, X, \mu)$, we know that there exists a stochastically delayed version of the system $(M_L, X_L, \pi)$ which has a first mean stable Lipschitz set $(S_L, \mathbb{R}^{n(L+1)}$. Define the set
\begin{align*}
    S = \{f(A_D) : A_D \in S_L\}
\end{align*}

$(S, \mathbb{R}^{n(L+1)}, \mu')$ is a Lipschitz set for $(M, X, \mu)$.
Then, if we proceed with the same process outlined in the first case of the proof, we can determine that $E_{\mu'}[S] < 1$ if and only if $E_{\pi'}[S_L] < 1$, however, we know that $E_{\pi'}[S_L] < 1$, so $(S, \mathbb{R}^{n(L+1)}, \mu')$ is exponentially stable in first mean. We can then apply the first case of the theorem to determine that $(M, X, \mu)$ is patiently stable.


\end{proof}

\begin{figure}
\begin{center}
    \begin{overpic}[scale=.4]{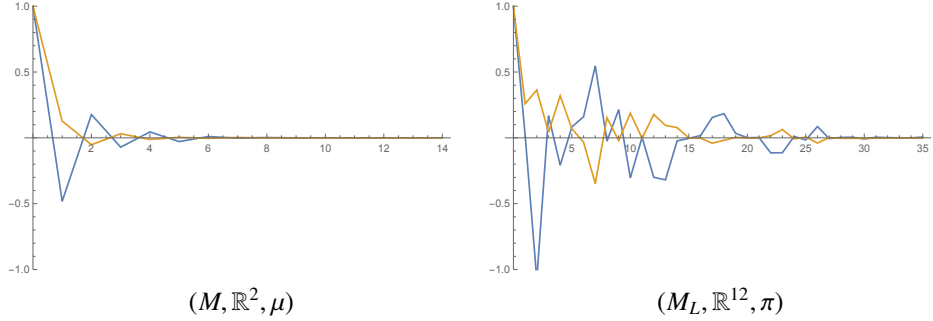}
    \put(20,-2){$(M,\mathbb{R}^{2},\mu)$}
    \put(70,-2){$(M_L,\mathbb{R}^{12},\pi)$}
    \end{overpic}
\vspace{0.25cm}
\caption{The the stable dynamics of the switched system $(M,\mathbb{R}^{2},\mu)$ from Example \ref{ex:4} is shown (left). Since the system is patient first mean stable then any delayed version of the system $(M_L,\mathbb{R}^{2(L+1)},\pi)$ for any $L<\infty$ is also stable. Shown right is the delayed version of this system with $L=5$.}\label{fig:6}
\end{center}
\end{figure}

In all examples given thus far in this paper we have considered only discrete probability spaces. It is worth noting that our results hold for any switched system that satisfies the conditions of Theorem \ref{thm:3}, including those systems that switch over a continuous set of functions. An example of this is the following.

\begin{example}\label{ex:5}
Consider the linear switched system $(M,\mathbb{R}^2,\mu)$ where the set $M\subset\mathbb{R}^{2\times 2}$ is given by
\begin{align*}
M = \begin{bmatrix}
[-0.8, 0] & [0.05, 0.35] \\
[0.05, 0.35] & [0, 0.08] \\
\end{bmatrix}
\end{align*}
where a matrix $A\in M$ has entry $a_{ij}$ drawn independently and uniformly from the interval in the $m_{ij}$ entry of $M$. We obtain a Lipschitz matrix for each $A \in M$ using Equation \ref{PartialsStabilityMatrix}, giving us a Lipschitz set $(S, \mathbb{R^2}, \mu')$ for $M$ where
\begin{align*}
S = \begin{bmatrix}
[0, 0.8] & [0.05, 0.35] \\
[0.05, 0.35] & [0, 0.08] \\
\end{bmatrix}
\end{align*}
and where $\mu'$ is similarly the distribution on $S$ where each element is independently and uniformly distributed on the given interval.

To calculate the one-radius of $S$, we first take the entrywise expectation to get

\begin{align*}
    E[S] = \begin{bmatrix}
        0.4 & 0.2 \\
        0.2 & 0.04 \\
    \end{bmatrix}.
\end{align*}
As $\rho_1(S) = \rho(E[S]) \approx 0.489<1$, the switched system $(M, X, \mu)$ is first-mean stable. Furthermore, by Theorem \ref{thm:3}, $M$ is also patiently first mean stable meaning that any stochastically delayed version of $(M, X, \mu)$ will also be exponentially stable in first mean (see Figure \ref{fig:6}).
\end{example}

We now give a proof that the diagram in Figure 5 commutes.

\begin{proof}
We prove that the diagram shown in Figure 5 commutes for a stochastic switched system $(M, X, \mu)$ in the case where each $F \in M$ is differentiable. In other words, we want to show that given a switched system, it doesn't matter what order in which you delay and 'linearize' the system by creating the Lipschitz set; the result is the same ($\Lambda \circ \tau = \tau \circ \Lambda$). Because we are considering the case where each $F \in M$ is differentiable, we can use Equation $\ref{PartialsStabilityMatrix}$ to define the linearization operator $\Lambda$.

We define $\Lambda$ as
\begin{equation}
    \Lambda(F) = A = [a_{ij}]
    \quad
    \text{ where }
    \quad
    a_{ij} = \sup_{x \in X} \left\lvert\frac{\partial F_i}{\partial x_j}(\mathbf{x})\right\rvert.
\end{equation}


The diagram still commutes even when some $F$ is not differentiable, however, the operator $\Lambda$ will be a different bijective function.

We also define the delay operator $\tau_L$ as in Definition \ref{def:3}, i.e., $\tau_L(F) = F_D$ where $F_D$ is the equation defined in Definition \ref{def:3}.

Let $F \in M$. First, we calculate $\Lambda \circ \tau_L (F)$, which corresponds to delaying the system and then computing the Lipschitz linearization.

Recall that for $1 \leq l \leq L, 1 \leq i \leq n$, we have $(F_D)_{i, l}(\mathbf{x}) = x_{i,l-1}.$
So for $1 \leq l \leq L, 1 \leq i \leq n, 0 \leq m \leq L, 1 \leq j \leq n$, we have
\begin{align*}
    [A_D]_{(i,l), (j,m)} = \sup_{x \in X} \left\lvert\frac{\partial F_{D \: i,l}}{\partial x_{j,m}}(\mathbf{x})\right\rvert \\
    = \begin{cases}
    1 \text{ if $i = j, m = l-1$} \\
    0 \text{ otherwise}.
    \end{cases}
\end{align*}

Now, for $\ell = 0$, we have $(F_D)_{i, 0}(\mathbf{x_{\ell}}) =  F_i(x_{1,d_{i1}},x_{2,d_{i2}},\hdots,x_{n,d_{in}})$.

Then

\begin{align*}
    [A_D]_{(i,0), (j,m)} = \sup_{x_{\ell}} \in X_{\ell} \left\lvert\frac{\partial F_{D \: i,0}}{\partial x_{j,m}}(\mathbf{x_{\ell}})\right\rvert \\
    = \sup_{x \in X} \left\lvert\frac{\partial F_i}{\partial x_{j,m}}(x_{1,d_{i1}},x_{2,d_{i2}},\hdots,x_{n,d_{in}})\right\rvert \\
    = \begin{cases}
    a_{ij} \text{ if $m = d_{ij}$} \\
    0 \text{ otherwise}
    \end{cases}.
\end{align*}

Now, we calculate $\tau_L \circ \Lambda (F)$, which corresponds to linearizing the system first and then delaying.

Let $A = \Lambda(F)$, and $A_D$ = $\tau_L(A)$. We have
\begin{align*}
   (A_D)_{i,\ell}(\mathbf{x_{\ell}}) = x_{i,\ell-1}\quad \text{ for } \quad 1 \le \ell \le L, \, 1 \leq i \leq n
\end{align*}

therefore
\begin{align*}
    [A_D]_{(i,l)(j,m)} =
    \begin{cases}
    1 \text{ if $i = j, m = l-1 $} \\
    0 \text{ otherwise}
    \end{cases}.
\end{align*}

And for $l = 0$, we have
\begin{align*}
    (A_D)_{i,0}(\mathbf{x_{\ell}}) = A_i(x_{1,d_{i1}},x_{2,d_{i2}},\hdots,x_{n,d_{in}})
    = a_{i1}x_{1, d_{i1}} + a_{i2}x_{2,d_{i2}} + \hdots +
    a_{in}x_{n, d_{in}}
\end{align*}

so
\begin{align*}
    [A_D]_{(i,0),(j,m)} =
    \begin{cases}
    a_{ij}  \text{ if $m = d_{ij}$} \\
    0 \text{ otherwise }
    \end{cases}.
\end{align*}

Therefore $\tau_L \circ \Lambda (F) = \Lambda \circ \tau_L (F)$.
\end{proof}

\section{Conclusion}\label{sec:conclusion}

In this paper we defined the notion of patient first-mean stability for nonlinear switched systems, and presented a computationally efficient means of checking this form of stability.
It is our hope that practitioners of control and systems theory will apply this theory to novel applications.
We expect that these results will greatly simplify the stability analysis of stochastically switched, time-delayed systems.

A significant research question which remains open is whether the requirement that switching is iid. at each time step can be weakened.
For example, it seems reasonable that similar computationally efficient criteria might exist for the patient $p^{th}$ mean stability of Markov systems, we look forward to further developments in this area.
We also encourage the incorporation of this analysis into open-source software, automating the construction of the Lipschtiz set $S$ and the expectation matrix $E_{\mu'}[S]$ for a given stochastic switched system $(M,X,\mu)$.

\section{References}

\end{document}